\DeclareMathOperator\Ran{Ran}
\newcommand{\R}{{\mathbb R}}
\newcommand{\C}{{\mathbb C}}
\newcommand{\N}{{\mathbb N}}
\newcommand{\Q}{{\mathbb Q}}
\newcommand{\Z}{{\mathbb Z}}
\newcommand{\jcal}{{\mathcal J}}
\newcommand{\kcal}{{\mathcal K}}
\newcommand{\pcal}{{\mathcal P}}
\newcommand{\fancyI}{\mathcal{I}}
\newcommand{\fancyJ}{\mathcal{J}}
\newcommand{\fancyL}{\mathcal{L}}
\numberwithin{equation}{section}
\newtheorem{Theorem}{Theorem}[section]
\newtheorem*{Theorem*}{Theorem}
\newtheorem{Proposition}[Theorem]{Proposition}
 { \theoremstyle{definition}

 }
\begin{document}

%\allowdisplaybreaks

\newcommand{\arXivNumber}{2501.07756}

\renewcommand{\PaperNumber}{035}

\FirstPageHeading

\ShortArticleName{A Pile of Shifts II: Structure and $K$-Theory}

\ArticleName{A Pile of Shifts II: Structure and $\boldsymbol{K}$-Theory}

\Author{Shelley HEBERT~$^{\rm a}$, Slawomir KLIMEK~$^{\rm b}$, Matt MCBRIDE~$^{\rm a}$ and J. Wilson PEOPLES~$^{\rm c}$}

\AuthorNameForHeading{S.~Hebert, S.~Klimek, M.~McBride and J.W.~Peoples}

\Address{$^{\rm a)}$~Department of Mathematics and Statistics, Mississippi State University,\\
\hphantom{$^{\rm a)}$}~175 President's Cir. Mississippi State, MS 39762, USA}
\EmailD{\href{mailto:sdh7@msstate.edu}{sdh7@msstate.edu}, \href{mailto:mmcbride@math.msstate.edu}{mmcbride@math.msstate.edu}}

\Address{$^{\rm b)}$~Department of Mathematical Sciences, Indiana University Indianapolis,\\
\hphantom{$^{\rm b)}$}~402 N. Blackford St., Indianapolis, IN 46202, USA}
\EmailD{\href{mailto:klimek@iu.edu}{klimek@iu.edu}}

\Address{$^{\rm c)}$~Department of Mathematics, Pennsylvania State University, \\
\hphantom{$^{\rm c)}$}~107 McAllister Bld., University Park, State College, PA 16802, USA}
\EmailD{\href{mailto:peoplesjwilson@gmail.com}{peoplesjwilson@gmail.com}}

\ArticleDates{Received January 16, 2025, in final form May 06, 2025; Published online May 12, 2025}

\Abstract{We discuss $C^*$-algebras associated with several different natural shifts on the Hilbert space of the $s$-adic tree, continuing the analysis from [\textit{Banach~J. Math. Anal.} \textbf{19} (2025), 32, 30~pages, arXiv:2412.00854] and in particular we describe their structure and compute the $K$-theory groups.}

\Keywords{crossed products by endomorphims; $K$-theory; $s$-adic integers}

\Classification{47L65; 46L35; 46L80}

\section{Introduction}
Let $\Z_s$ be the ring of $s$-adic integers, where $s$ is a positive integer, not necessarily a prime. Let~$\mathcal{V}$ be the set of balls in $\Z_s$. The corresponding Hilbert space $\ell^2(\mathcal{V})$ carries a natural, diagonal representation of the algebra of continuous functions on $\Z_s$. The purpose of this paper and the companion paper \cite{HKMP3} is to study natural $C^*$-algebras in $\ell^2(\mathcal{V})$ generated by $C(\Z_s)$ along with specific shifts. Those shift algebras are interesting examples of $C^*$-algebras with connections to number theory and combinatorics, a subject that has gained some popularity, see for example~\cite{CL,R}.

Our motivation to study $C^*$-algebras associated with shifts on trees partially comes from trying to extend the results from \cite{KMR, KRS1, KRS2} on $p$-adic spectral triples as well as ideas from~\cite{PB} on noncommutative geometry of ultrametric Cantor sets. A heuristic goal is to understand ``quantum'' non-archimedean spaces and their geometry with seemingly many candidates for such concepts.

We review all the definitions and the key results from the companion paper \cite{HKMP3} so that the present paper is self-contained. The main purpose of \cite{HKMP3} was to determine how each of those shift algebras can be identified with a crossed product by an endomorphism, in fact a~monomorphism with a hereditary range. In this paper, we delve deeper into the structure of the shift algebras and compute their $K$-theory.

There are several competing concepts of crossed product by an endomorphism \cite{ABL,BKR,E,Mu,St} but they all agree for monomorphisms with a hereditary range. In \cite{HKMP3}, we took a pragmatic approach to crossed products by finding enlargements of the initial algebra $C(\Z_s)$ up to a~better behaved ``coefficient'' $C^*$-subalgebras of the shift algebras which makes the corresponding monomorphisms to have hereditary range. Those coefficient algebras are the starting point of the current paper.

To understand the structure of the shift algebras, we consider natural ideals in each of the algebras and then describe their quotients. The main technique is an analog of a basic construction for usual crossed products by automorphisms involving invariant ideals, see \cite[Section~3.4]{Williams}. In particular, we prove a counterpart to \cite[Proposition 3.19]{Williams} which gives a short exact sequence of crossed products obtained from an invariant ideal.

Another key technique used in this paper to understand the structure of some crossed products is to make them concrete algebras through faithful representations. Faithfulness is established using a version of the O'Donovan condition easily implied by the existence of gauge actions.

A short exact sequence of shift $C^*$-algebras induces a $6$-term exact sequence in $K$-theory which we then, in each case, analyze and reduce to short exact sequences. This is enough to determine the K-groups in all but the Bunce--Deddens shift case for which the sequence does not split. Instead, in this case, we describe the $K$-theory as a direct limit of finitely generated free Abelian groups.

The paper is organized as follows. Preliminary Section \ref{sec2} contains the definitions of shift algebras and a gauge action for each. Crossed products by endomorphisms, coefficient algebras with gauge actions and invariant ideals are the subject of Section \ref{sec3}. Each of the following four sections is devoted to one of the shift algebras. Those sections are structured similarly: we recall the coefficients algebras from \cite{HKMP3}, describe the coefficient algebra ideals, their quotients and the quotient endomorphisms, followed by the identification of the corresponding shift algebra ideals and their quotients. Each section is then concluded with a calculation of the $K$-theory.

\section{Preliminaries}\label{sec2}
\subsection[s-adic integers]{$\boldsymbol{s}$-adic integers}
For $s\in\N$ with $s\ge2$, the space of $s$-adic integers $\Z_s$ is defined as consisting of infinite sums
\begin{equation*}
\Z_s= \Biggl\{x=\sum_{j=0}^\infty x_j s^j\mid 0\le x_j\le s-1, \, j=0,1,2,\dots\Biggr\} .
\end{equation*}

We call the above expansion of $x$ its $s$-adic expansion. Note that $\Z_s$ is isomorphic with the countable product $\prod \Z/s\Z$ and so, when the product is equipped with the Tychonoff topology, it is a Cantor set. It is also a metric space with the usual norm $|\cdot|_s$, which is defined as follows
$
|0|_s=0$ and, if $x\ne 0$, $ |x|_s=s^{-n}$,
where $x_n$ is the first nonzero term in the above $s$-adic expansion.
Moreover, $\Z_s$ is an Abelian ring with unity with respect to addition and multiplication with a carry and we have
\smash{$\Z_s\cong\lim\limits_{\longleftarrow}\Z/s^n\Z$}.
Using the division algorithm modulo~$s$ repeatedly, we can associate an $s$-adic expansion to any integer and so $\Z_s$ naturally contains a copy of $\Z$ which is dense in $\Z_s$.

Any ball in $\Z_s$ is of the form
\begin{equation*}
\left\{y\in\Z_s\mid |y-x|_s\le\frac{1}{s^n}\right\},
\end{equation*}
where $n\in\Z_{\ge0}$ and $0\le x<s^n$. Here $x$ is a particular center of this ball. Consequently, we can write the set of balls $\mathcal{V}$ in the following way
$
\mathcal{V}=\{(n,x)\mid n=0,1,2,\dots,\, 0\le x<s^n\} $.

\subsection{Hilbert spaces and shifts}

Throughout this paper, we use the Hilbert space $H=\ell^2(\mathcal{V})$ with canonical basis $\{E_{(n,x)}\}$, where~${n=0,1,2,\dots}$ and $0\le x<s^n$. Let $f\in C(\Z_s)$ and define a bounded operator $M_f\colon H\to H$ by
$M_fE_{(n,x)}=f(x)E_{(n,x)}$.
Due to the density of $\Z_{\ge0}$ in $\Z_s$, we have that
\begin{equation*}
\|M_f\| = \sup_{x\in\Z_{\ge0}}{|f(x)|} = \sup_{x\in\Z_s}{|f(x)|}
\end{equation*}
and so $f\mapsto M_f$ is a faithful representation of the $C^*$-algebra $C(\Z_s)$.

There are four main shifts in $H$ that are the focus of this paper and which we call the Bunce--Deddens shift, the Hensel shift, the Bernoulli shift, and the Serre shift. All four shifts are defined on $H$ with formulas given on the basis elements of $H$ as follows:
\begin{enumerate}\itemsep=0pt
\item[(1)] \textit{The Bunce--Deddens shift:} $U\colon H\to H$ via
$
UE_{(n,x)}= E_{(n+1, x+1)}$.
\item[(2)] \textit{The Hensel shift:} $V\colon H\to H$ via
$
VE_{(n,x)} = E_{(n+1, sx)}$.
\item[(3)] \textit{The Bernoulli shift:} $S\colon H\to H$ via
\begin{equation*}
SE_{(n,x)}=\frac{1}{s}\sum_{j=0}^{s-1}E_{(n+1, sx+j)} .
\end{equation*}
\item[(4)] \textit{The Serre shift:} $W\colon H\to H$ via
\begin{equation*}
WE_{(n,x)} = \frac{1}{\sqrt{s}}\sum_{j=0}^{s-1}E_{(n+1, x+js^n)} .
\end{equation*}
\end{enumerate}

A computation yields their respective adjoints
\begin{gather*}
(1) \quad U^*E_{(n,x)} =
\begin{cases}
E_{(n-1, x-1)} &\textrm{if}\ n\ge1,\ 0<x\le s^{n-1} ,\\
0 &\textrm{if}\ n=0 \ \textrm{or}\ x=0\ \textrm{or}\ s^{n-1}<x<s^n,
\end{cases}
\\
(2) \quad
V^*E_{(n,x)} =
\begin{cases}
E_{(n-1, x/s)} &\textrm{if}\ n\ge1,\ s\mid x,\\
0 &\textrm{if}\ n=0\ \textrm{or}\ s\nmid x,
\end{cases}\\
(3)\quad
S^*E_{(n,x)} =
\begin{cases}
E_{(n-1, (x-x\textrm{ mod }s)/s)} &\textrm{if}\ n\ge1,\\
0&\textrm{if}\ n=0,
\end{cases}\\
(4) \quad
W^*E_{(n,x)} =
\begin{cases}
\dfrac{1}{\sqrt{s}}E_{(n-1, x\textrm{ mod }s^{n-1})} &\textrm{if}\ n\ge1,\\
0&\textrm{if}\ n=0.
\end{cases}
\end{gather*}
It is easy to verify that
$U^*U=V^*V=S^*S=W^*W=I$,
hence each one is an isometry. For each of these four shifts $\mathcal{J}\in\{U, V, S, W\}$, we have the corresponding shift $C^*$-algebras
\begin{equation*}
A_{\mathcal{J}} := C^*(\mathcal{J},M_f \mid f\in C(\Z_s)) ,
\end{equation*}
which are the main objects of interest in this paper.

By analogy with graph algebras, a gauge action is an important structure shared by all those shift algebras. For $\theta\in\R/\Z$, consider the following strongly continuous one-parameter family~$\{\mathcal{U}_\theta\}$ of unitary operators on $\ell^2(\mathcal{V})$ defined on basis elements by
$
\mathcal{U}_\theta E_{(n,x)}={\rm e}^{2\pi {\rm i}n\theta}E_{(n,x)}$.
We define a continuous one-parameter family $\{\rho_\theta\}$ of automorphisms of $A_{\mathcal{J}}$, the gauge action~${\rho_\theta\colon A_{\mathcal{J}}\to A_{\mathcal{J}}}$, by the formula
$\rho_\theta(a)=\mathcal{U}_\theta a \mathcal{U}_\theta^{-1}$.
Then on generators of $A_{\mathcal{J}}$ we have
\begin{equation*}
\rho_\theta(\mathcal{J})={\rm e}^{2\pi {\rm i}\theta}\mathcal{J} ,\qquad \rho_\theta(\mathcal{J}^*)={\rm e}^{-2\pi {\rm i}\theta}\mathcal{J}^* ,\qquad\textrm{and}\qquad \rho_\theta(M_f) = M_f .
\end{equation*}

Throughout the rest of this paper, we describe the structure of each algebra and study its $K$-theory.

\section{Coefficient algebras with a gauge action}\label{sec3}
\subsection{Coefficient algebras}
The notion of a coefficient algebra was introduced in~\cite{LO}. A review of this concept, with additional material suitable for our purposes, is the main theme of this section.

Given a Hilbert space $\mathcal{H}$, let ${\mathcal{J}} \in B(\mathcal{H})$ be an isometry and let $B_\jcal \subseteq B(\mathcal{H})$ be a $C^*$-subalgebra of $B(\mathcal{H})$ containing the identity $I$. We use the notation $A_{\mathcal{J}}$ for the $C^*$-algebra generated by $B_\jcal$ and ${\mathcal{J}}$. We call $B_\jcal$ a {\it coefficient algebra} of the $C^*$-algebra $A_{\mathcal{J}}$ if $B_\jcal$ and ${\mathcal{J}}$ satisfy the following conditions
$
\alpha_\jcal(a):={\mathcal{J}} a{\mathcal{J}}^* \in B_\jcal
$
and
$
\beta_\jcal(a):={\mathcal{J}}^*a{\mathcal{J}} \in B_\jcal$,
for all $a\in B_\jcal$.

It is easy to see that $\alpha_\jcal$ is an endomorphism of $B_\jcal$ and we have the following properties~${
\beta_\jcal\alpha_\jcal(a)=a}$, $ \alpha_\jcal\beta_\jcal(a)=\alpha_\jcal(I)a\alpha_\jcal(I)$, and $ \beta_\jcal(I)=I$.
Moreover, the map $\beta_\jcal\colon B_\jcal\to B_\jcal$ is linear, continuous, positive and has the ``transfer'' property~${
\beta_\jcal(\alpha_\jcal(a)b)=a\beta_\jcal(b)}$,
and thus is an example of Exel's transfer operator \cite{E}. Also notice that if the algebra $B_\jcal$ is commutative, or more generally if $\alpha_\jcal(I)$ is in the center of $B_\jcal$, then~$\beta_\jcal$ is also an endomorphism. Indeed, notice that $\alpha_\jcal(I) = \jcal\jcal^*$ and we have that
\begin{equation*}
\beta_\jcal(a)\beta_\jcal(b) = \jcal^*a\jcal\jcal^*b\jcal=\jcal^*a\alpha_\jcal(I)b\jcal=\jcal^*\alpha_\jcal(I)ab\jcal=\jcal^*\jcal\jcal^*ab\jcal=\beta_\jcal(ab) .
\end{equation*}

Clearly, $\alpha_\jcal(I)$ is a projection and notice that $\textrm{Ran}(\alpha_\jcal)$, the range of $\alpha_\jcal$, is a hereditary subalgebra of $B_\jcal$ and we have
$\Ran(\alpha_\jcal)=\alpha_\jcal(I)B_\jcal\alpha_\jcal(I)$.

\subsection{Gauge action}
We now consider additional structure called a gauge action. Assume that there is a one-parameter family $\{\mathcal{U}_\theta\}$, for $\theta\in\R/\Z$, of unitary operators on $\mathcal{H}$ satisfying the following commutation properties
$\mathcal{U}_\theta a = a \mathcal{U}_\theta $, $ \mathcal{U}_\theta \mathcal{J} = {\rm e}^{2\pi {\rm i}\theta}\mathcal{J} \mathcal{U}_\theta
$
for all $a\in B_\jcal$.
We define a one-parameter family~$\{\rho_\theta\}$ of automorphisms $\rho_\theta\colon B(\mathcal{H})\to B(\mathcal{H})$ by the formula
$
\rho_\theta(a)=\mathcal{U}_\theta a \mathcal{U}_\theta^{-1}$.
It follows that on generators of $A_{\mathcal{J}}$ we have
\[
\rho_\theta(\mathcal{J})={\rm e}^{2\pi {\rm i}\theta}\mathcal{J}, \qquad\rho_\theta(\mathcal{J}^*)={\rm e}^{-2\pi {\rm i}\theta}\mathcal{J}^*,\qquad \text{and} \qquad\rho_\theta(a) = a,\quad a\in B_\jcal.
\]
Clearly, the map $\theta\mapsto \rho_\theta(x)$ is continuous for all $x$ which are polynomials in $\mathcal{J}$ and $\mathcal{J}^*$ with coefficients in $B_\jcal$. It follows by an approximation argument that it is continuous for every~${x\in A_{\mathcal{J}}}$.

As a consequence of the existence of a gauge action, we have an expectation
$E\colon A_\mathcal{J}\to A_\mathcal{J}$
given by the formula
\begin{equation}\label{expect_formula}
E(x):=\int_0^1 \rho_\theta(x) {\rm d}\theta.
\end{equation}

The authors of \cite{HKMP3} noted that $C(\Z_s)$ is not a coefficient algebra for the shift algebras currently considered. Instead, coefficient algebra extensions of $C(\Z_s)$ were constructed as the invariant subalgebras $A_{\mathcal{J},\textrm{inv}}$ of $A_\mathcal{J}$ with respect to the automorphism $\rho_\theta$.
This is because $C(\Z_s)\subseteq A_{\mathcal{J},\textrm{inv}}$ and, for every $a\in A_{\mathcal{J},\textrm{inv}}$, we also have
$
\mathcal{J}^*a\mathcal{J}\in A_{\mathcal{J},\textrm{inv}}$ and $ \mathcal{J}a\mathcal{J}^*\in A_{\mathcal{J},\textrm{inv}}$.

\subsection{O'Donovan condition}
Using the commutation relation
${\mathcal{J}} a=\alpha_\jcal(a){\mathcal{J}}$,
one can arrange the polynomials in ${\mathcal{J}}$, ${\mathcal{J}}^*$ with coefficients in $B_\jcal$ so that the coefficients are in front of the powers of ${\mathcal{J}}$ and behind the powers of ${\mathcal{J}}^*$. This leads to the following observation (see~\cite[Proposition~2.3]{LO}):
the vector space~$\mathcal{F}(B_\jcal,{\mathcal{J}})$ consisting of finite sums
\begin{equation}\label{FourierV}
x=\sum_{n\ge0}a_n {\mathcal{J}}^n + \sum_{n<0}({\mathcal{J}}^*)^{-n} a_n ,
\end{equation}
where $a_n\in B_\jcal$, is a dense $*$-subalgebra of $A_{\mathcal{J}}$.
Additionally, using ${\mathcal{J}}^n = {\mathcal{J}}^n({\mathcal{J}}^*)^n{\mathcal{J}}^n$ and its adjoint
one can always choose coefficients $a_n$ such that
$
a_n=a_n{\mathcal{J}}^n({\mathcal{J}}^*)^n$, $ n\geq 0 $ and $ a_n={\mathcal{J}}^{-n}({\mathcal{J}}^*)^{-n}a_n$, $ n<0$.

Then, applying the expectation defined in equation \eqref{expect_formula}, we obtain
\begin{equation}\label{ancoeff}
a_n=E(x({\mathcal{J}}^*)^n) ,\quad n\geq 0\qquad \textrm{and}\qquad a_n=E({\mathcal{J}}^{-n}x),\quad n<0,
\end{equation}
and thus such coefficients $a_n$ are unique. More generally, by the above formulas we can associate ``Fourier'' coefficients $a_n\in B_\jcal$ to any $x\in A_\jcal$ and, by the usual Cesaro summation argument~\cite{K}, the coefficients $a_n$ fully determine $x$.

We also have
$\|a_0\|=\|E(x)\|\leq \|x\|$,
which is called the O'Donovan condition.
A key result (see \cite[Theorem~2.7]{LO}) says that since~$A_{\mathcal{J}}$ satisfies the O'Donovan condition then $A_{\mathcal{J}}$ is isomorphic to the crossed product $C^*$-algebra~${B_\jcal\rtimes_{\alpha_\jcal}\N}$
\begin{equation}\label{AJCross}
A_{\mathcal{J}}\cong B_\jcal\rtimes_{\alpha_\jcal}\N,
\end{equation}
see \cite{BKR} and the appendix of~\cite{HKMP2}. For concreteness, we use Stacey's ``multiplicity-one crossed product'', see~\cite{BKR}. Namely,
$B_\fancyJ \rtimes_{\alpha_\fancyJ} \N$ is defined to be the universal $C^*$-algebra generated by a~copy of $B_\fancyJ$ and an isometry $\fancyL$ such that for all $a\in B_\fancyJ$,
$
\fancyL a \fancyL^* = \alpha_\fancyJ(a)$.

The significance of the equation \eqref{AJCross} is that the crossed product on the right-hand side is a~universal object, defined in terms of generators and relations, and thus it gives us a way to construct convenient, concrete representations. A representation of the crossed product ${B_\fancyJ \rtimes_{\alpha_\fancyJ} \N}$ is faithful if it is faithful on $B_\fancyJ$ and satisfies the O'Donovan condition \cite{BKR}. In particular, such a~representation is faithful if it is faithful on $B_\fancyJ$ and admits a gauge action.

\subsection{Invariant ideals}
 In this section, we consider invariant ideals in coefficient algebras and the structure they impose on the crossed products, analogously to the usual crossed products by automorphisms, see \cite[Section~3.4]{Williams}. While this is standard material, for completeness we provide short proofs.

%\subsubsection{}
Let $I_\mathcal{J}$ be an $\alpha_\jcal$ and $\beta_\jcal$ invariant ideal in $B_\mathcal{J}$ and let $\mathcal{I}_\jcal$ be the subalgebra in $A_\jcal$ defined as
$
\mathcal{I}_\jcal := C^*(a\jcal^n \mid a\in I_\jcal,\, n=0,1,\dots)$.
Note that $I_\fancyJ \subseteq \fancyI_\fancyJ$. Alternatively, one can define~$\mathcal{I}_\jcal$ by using the Fourier coefficients $a_n$ from equations \eqref{FourierV} and \eqref{ancoeff}
$
x\in \mathcal{I}_\jcal$ if and only if for all~${n\in\Z\colon a_n\in I_\mathcal{J}}$.

We have the following elementary fact.
\begin{Proposition}
The algebra $\mathcal{I}_\jcal$ is an ideal in $A_\jcal$.
\end{Proposition}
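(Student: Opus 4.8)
The plan is to exploit the commutation relation $\jcal a=\alpha_\jcal(a)\jcal$ and its consequences, together with the $\alpha_\jcal$- and $\beta_\jcal$-invariance of $I_\jcal$, in order to reduce the claim to a short list of generator-level computations. First I would record the identities obtained by iterating that relation: $\jcal^n a=\alpha_\jcal^n(a)\jcal^n$ and, after taking adjoints, $a(\jcal^*)^n=(\jcal^*)^n\alpha_\jcal^n(a)$ for $a\in B_\jcal$ and $n\ge0$, as well as $\jcal^*a\jcal=\beta_\jcal(a)$ and $\jcal\jcal^*=\alpha_\jcal(I)$. Since $I_\jcal$ is a closed ideal of the $C^*$-algebra $B_\jcal$, it is self-adjoint, and hence $\mathcal{I}_\jcal$ is generated as a $*$-algebra by the elements $a\jcal^n$ together with their adjoints $(\jcal^*)^n a$, where $a\in I_\jcal$ and $n\ge0$.

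The structural reduction is as follows. As $\mathcal{I}_\jcal$ is closed and self-adjoint by construction, and $A_\jcal=A_\jcal^*$, it suffices to prove that $\mathcal{I}_\jcal$ is a left ideal, for then $\mathcal{I}_\jcal A_\jcal=(A_\jcal\mathcal{I}_\jcal)^*\subseteq\mathcal{I}_\jcal$. By continuity of multiplication and density, the left-ideal property follows once I verify that $g\cdot z\in\mathcal{I}_\jcal$ whenever $g\in B_\jcal$ or $g\in\{\jcal,\jcal^*\}$ is a generator of $A_\jcal$ and $z$ is one of the generators $a\jcal^n$, $(\jcal^*)^n a$ of $\mathcal{I}_\jcal$: left multiplication by an arbitrary monomial in the generators of $A_\jcal$ is then handled by iterating this single step, and $\mathcal{I}_\jcal$, being a $*$-subalgebra, already absorbs right multiplication by its own generators.

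I would then run through the cases. For $g=b\in B_\jcal$ the ideal property of $I_\jcal$ and the identities above give $b\,a\jcal^n=(ba)\jcal^n$ and $b(\jcal^*)^n a=(\jcal^*)^n(\alpha_\jcal^n(b)a)$, both with coefficient back in $I_\jcal$; for $g=\jcal$ one gets $\jcal a\jcal^n=\alpha_\jcal(a)\jcal^{n+1}$, using $\alpha_\jcal$-invariance, and $\jcal(\jcal^*)^na$ is treated as in the next sentence. The delicate entries are those containing $\jcal^*$, precisely because $\jcal$ is a non-unitary isometry with $\jcal\jcal^*=\alpha_\jcal(I)\ne I$: for $\jcal^*a\jcal^n$ I would use $\jcal^*a\jcal=\beta_\jcal(a)$ and $\beta_\jcal$-invariance when $n\ge1$, and the self-adjointness of $\mathcal{I}_\jcal$ when $n=0$; for $\jcal(\jcal^*)^na$ with $n\ge1$ I would collapse $\jcal\jcal^*=\alpha_\jcal(I)$, commute the resulting projection through the remaining powers of $\jcal^*$ via $\alpha_\jcal(I)(\jcal^*)^{n-1}=(\jcal^*)^{n-1}\alpha_\jcal^n(I)$, and absorb $\alpha_\jcal^n(I)a\in I_\jcal$. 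The remaining combinations, such as $\jcal^*(\jcal^*)^na=(\jcal^*)^{n+1}a$, are immediate.

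I expect the main obstacle to be exactly the bookkeeping in the $\jcal^*$ cases: one must repeatedly push coefficients across powers of $\jcal^*$ while tracking the projection $\alpha_\jcal(I)$ produced by $\jcal\jcal^*$, invoking the $\alpha_\jcal$- and $\beta_\jcal$-invariance of $I_\jcal$ at the right moments and using the automatic self-adjointness of $I_\jcal$ to dispose of the $n=0$ boundary cases. As a cross-check I would note the alternative route via the Fourier-coefficient description $x\in\mathcal{I}_\jcal$ iff $a_n\in I_\jcal$ for all $n$, verifying that the coefficients of $xy$ and $yx$ stay in $I_\jcal$; however, the direct generator computation is cleaner and avoids the convolution-type formulas for the coefficients of a product.
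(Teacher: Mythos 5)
Your proposal is correct and follows essentially the same route as the paper: a generator-by-generator verification using the relations $\jcal a=\alpha_\jcal(a)\jcal$, $\jcal^*a\jcal=\beta_\jcal(a)$, $\jcal\jcal^*=\alpha_\jcal(I)$ and the $\alpha_\jcal$-, $\beta_\jcal$-invariance of $I_\jcal$. The only difference is organizational: you halve the casework by reducing to left multiplication via self-adjointness of $\mathcal{I}_\jcal$, whereas the paper checks left and right products separately, and all your individual case computations match the paper's.
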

\begin{proof}
We write the generators more explicitly $A_\jcal = C^*(b,\jcal, \jcal^*\mid b\in B_\jcal)$ and similarly $\mathcal{I}_\jcal = C^*(a\jcal^n, (\jcal^*)^n a\mid a\in I_\jcal,\, n=0,1,\dots)$. It is sufficient to verify that the products of generators are in $\mathcal{I}_\jcal$. This requires checking multiple cases.

Let $a\in I_\jcal$, $b\in B_\jcal$. It is clear that $ba\jcal^n, a\jcal^n \jcal$, and $\jcal a\jcal^n$ are in $\mathcal{I}_\jcal$.
Next, note that~${a \jcal^n b = a \alpha_\jcal^n(b)\jcal^n\in \mathcal{I}_\jcal}$ since $a\alpha_\jcal^n(b)\in I_\jcal$.

For adjoints, we have
\begin{equation*}
\jcal^* a \jcal^n = \begin{cases}
\jcal^* a \jcal \jcal^{n-1}, & n>0, \\ \jcal^* a, & n=0
\end{cases}
= \begin{cases}
\beta_\jcal(a)\jcal^{n-1}, & n>0, \\ \jcal^*, a & n=0
\end{cases} \in \mathcal{I}_\jcal
\end{equation*}
since $\beta_\fancyJ(a)\in I_\fancyJ$. Further, note that $\jcal \jcal^* = \alpha_\jcal(1) \in B_\jcal$ so
\begin{equation*}
a\jcal^n \jcal^* = \begin{cases}
a \jcal^{n-1}\alpha_\jcal(1), & n>0, \\ a\jcal^*, & n=0
\end{cases} = \begin{cases}
a\alpha_\jcal^n(1)\jcal^{n-1}, & n>0, \\ \jcal^* \alpha_\jcal(a), & n=0
\end{cases} \in \mathcal{I}_\jcal.
\end{equation*}
Finally, $(\jcal^*)^n ab\in \mathcal{I}_\jcal$, $b(\jcal^*)^na = (\jcal^*)^n\alpha_\jcal^n(b)a \in \mathcal{I}_\jcal$,
so $\mathcal{I}_\jcal$ is an ideal completing the proof.\looseness=-1
\end{proof}

%\subsubsection{}
Next consider the factor maps $[\alpha_\fancyJ],[\beta_\fancyJ]\colon B_\fancyJ/I_\fancyJ \to B_\fancyJ/I_\fancyJ$ defined by
\begin{equation*}
[\alpha_\fancyJ](a+I_\fancyJ) = \alpha_\fancyJ(a)+I_\fancyJ\qquad\text{and}\qquad [\beta_\fancyJ](a+I_\fancyJ)=\beta_\fancyJ(a)+I_\fancyJ.
\end{equation*}
\begin{Proposition}
$[\alpha_\fancyJ]$ is a monomorphism of $B_\fancyJ/I_\fancyJ$ with hereditary range.
\end{Proposition}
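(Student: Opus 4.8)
The plan is to check the three requirements separately: that $[\alpha_\fancyJ]$ is a well-defined $*$-endomorphism, that it is injective, and that its range is a hereditary subalgebra. The first is immediate and needs only the $\alpha_\fancyJ$-invariance of $I_\fancyJ$: if $a - a' \in I_\fancyJ$ then $\alpha_\fancyJ(a) - \alpha_\fancyJ(a') = \alpha_\fancyJ(a-a') \in I_\fancyJ$, so the formula descends to the quotient, and since $\alpha_\fancyJ$ is an endomorphism of $B_\fancyJ$ the induced map $[\alpha_\fancyJ]$ is an endomorphism of $B_\fancyJ/I_\fancyJ$.

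For injectivity I would exploit the left inverse $\beta_\fancyJ$. The identity $\beta_\fancyJ\alpha_\fancyJ(a) = a$ recorded earlier, together with the $\beta_\fancyJ$-invariance of $I_\fancyJ$, does all the work: if $[\alpha_\fancyJ](a + I_\fancyJ) = 0$, that is $\alpha_\fancyJ(a) \in I_\fancyJ$, then applying $\beta_\fancyJ$ and using invariance gives $a = \beta_\fancyJ\bigl(\alpha_\fancyJ(a)\bigr) \in I_\fancyJ$, so $a + I_\fancyJ = 0$. Hence $[\alpha_\fancyJ]$ is a monomorphism. This is precisely the step where the second invariance hypothesis is used; $\alpha_\fancyJ$-invariance alone only yields well-definedness.

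For the hereditary range I would transport the corner description $\Ran(\alpha_\fancyJ) = \alpha_\fancyJ(I)\,B_\fancyJ\,\alpha_\fancyJ(I)$ through the quotient map $q\colon B_\fancyJ \to B_\fancyJ/I_\fancyJ$. Writing $p = \alpha_\fancyJ(I)$ and noting that $[p] := q(p)$ is a projection in $B_\fancyJ/I_\fancyJ$, surjectivity of $q$ gives
\[
\Ran([\alpha_\fancyJ]) = q\bigl(\Ran(\alpha_\fancyJ)\bigr) = q\bigl(p\,B_\fancyJ\,p\bigr) = [p]\,(B_\fancyJ/I_\fancyJ)\,[p].
\]
Since a corner $[p](B_\fancyJ/I_\fancyJ)[p]$ cut out by a projection is automatically a hereditary $C^*$-subalgebra, this proves the range is hereditary. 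Here I use that $\alpha_\fancyJ$ and $q$ are $*$-homomorphisms, so the relevant images are genuine (closed) $C^*$-subalgebras; in fact, since $[\alpha_\fancyJ]$ is injective it is isometric, so its range is closed without further argument.

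I do not anticipate a serious obstacle: the argument rests entirely on two facts from the preceding subsection, namely $\beta_\fancyJ\alpha_\fancyJ = \mathrm{id}$ and $\Ran(\alpha_\fancyJ) = \alpha_\fancyJ(I)B_\fancyJ\alpha_\fancyJ(I)$. The only point requiring a moment's care is the middle equality $q(p\,B_\fancyJ\,p) = [p](B_\fancyJ/I_\fancyJ)[p]$, which again follows from surjectivity of $q$: every element of the right-hand side has the form $[p]\,q(b)\,[p] = q(pbp)$ for some $b \in B_\fancyJ$, and conversely.
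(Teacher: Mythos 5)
Your proposal is correct and follows essentially the same route as the paper: injectivity comes from the left inverse $\beta_\fancyJ$ descending to the quotient (the paper phrases this as $[\beta_\fancyJ]\circ[\alpha_\fancyJ]=[I]$, you unwind it element by element using $\beta_\fancyJ$-invariance of $I_\fancyJ$), and the hereditary range comes from pushing the corner description $\Ran(\alpha_\fancyJ)=\alpha_\fancyJ(I)B_\fancyJ\alpha_\fancyJ(I)$ through the surjective quotient map. You simply make explicit a few points the paper leaves implicit (well-definedness, closedness of the image).
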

\begin{proof}
Note that $\alpha_\fancyJ$ is an endomorphism of $B_\fancyJ$, thus $[\alpha_\fancyJ]$ is an endomorphism of $B_\fancyJ/I_\fancyJ$. Further, $\beta_\fancyJ \circ \alpha_\fancyJ = I$, so $[\beta_\fancyJ]\circ[\alpha_\fancyJ]=[I]$, thus $[\alpha_\fancyJ]$ is a monomorphism.

For the range of $[\alpha_\fancyJ]$, we know from \cite{HKMP3} that
$
\mathrm{Ran} \alpha_\fancyJ = \alpha_\fancyJ(I)B_\fancyJ \alpha_\fancyJ(I)$.
This implies that
$
 \mathrm{Ran} [\alpha_\fancyJ] = [\alpha_\fancyJ][I]B_\fancyJ/I_\fancyJ [\alpha_\fancyJ][I]
$
and so $[\alpha_\fancyJ]$ has hereditary range.
\end{proof}

%\subsubsection{}
We now investigate the structure of the factor algebra $A_\fancyJ/\fancyI_\fancyJ$.
\begin{Proposition}\label{FactorCross}
We have the following isomorphism of algebras:
\begin{equation*}
B_\fancyJ/I_\fancyJ \rtimes_{[\alpha_\fancyJ]} \N \cong A_\fancyJ/\fancyI_\fancyJ.
\end{equation*}
\end{Proposition}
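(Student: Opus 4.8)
The plan is to realize the stated isomorphism as a single surjective $*$-homomorphism $\Phi$ out of the crossed product, obtained from its universal property, and then to prove that $\Phi$ is faithful by invoking the gauge-action criterion recalled above. Note first that the crossed product $B_\fancyJ/I_\fancyJ \rtimes_{[\alpha_\fancyJ]}\N$ is a legitimate object precisely because the preceding proposition shows $[\alpha_\fancyJ]$ is a monomorphism with hereditary range.

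First I would build the map. Let $q\colon A_\fancyJ \to A_\fancyJ/\fancyI_\fancyJ$ be the quotient map and set $\bar\fancyJ := q(\fancyJ)$; since $\fancyJ^*\fancyJ = I$ and $q$ is a $*$-homomorphism, $\bar\fancyJ$ is again an isometry. Because $I_\fancyJ \subseteq \fancyI_\fancyJ$, the restriction $q|_{B_\fancyJ}$ factors through a $*$-homomorphism $\bar q\colon B_\fancyJ/I_\fancyJ \to A_\fancyJ/\fancyI_\fancyJ$. Applying $q$ to the covariance relation $\fancyJ a \fancyJ^* = \alpha_\fancyJ(a)$ valid in $A_\fancyJ$ gives
\[
\bar\fancyJ\, \bar q(a + I_\fancyJ)\, \bar\fancyJ^* = \bar q\bigl([\alpha_\fancyJ](a + I_\fancyJ)\bigr),
\]
so the pair $(\bar q, \bar\fancyJ)$ satisfies exactly the defining relation of the crossed product. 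Its universal property then produces a $*$-homomorphism $\Phi\colon B_\fancyJ/I_\fancyJ \rtimes_{[\alpha_\fancyJ]}\N \to A_\fancyJ/\fancyI_\fancyJ$ sending the generating isometry to $\bar\fancyJ$ and restricting to $\bar q$ on the coefficient algebra. Surjectivity is immediate: $A_\fancyJ$ is generated by $B_\fancyJ$ and $\fancyJ$, hence $A_\fancyJ/\fancyI_\fancyJ$ is generated by $q(B_\fancyJ)$ and $\bar\fancyJ$, both in the range of $\Phi$.

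The work is in proving $\Phi$ injective, for which I would verify the two hypotheses of the criterion: faithfulness on the coefficient algebra and the existence of a compatible gauge action. Faithfulness on coefficients means $\bar q$ is injective, i.e. $B_\fancyJ \cap \fancyI_\fancyJ = I_\fancyJ$. Here I use the Fourier-coefficient description of $\fancyI_\fancyJ$: for $b \in B_\fancyJ$ the formulas \eqref{ancoeff} give $a_0 = E(b) = b$ and $a_n = 0$ for $n \neq 0$, since $\rho_\theta$ fixes $b$ while scaling $b(\fancyJ^*)^n$ by ${\rm e}^{-2\pi {\rm i} n\theta}$, which averages to zero. Thus $b \in \fancyI_\fancyJ$ if and only if its single nonzero Fourier coefficient $b$ lies in $I_\fancyJ$, giving $B_\fancyJ \cap \fancyI_\fancyJ = I_\fancyJ$. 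For the gauge action, I would push $\rho_\theta$ down to the quotient: since $\rho_\theta(a\fancyJ^n) = {\rm e}^{2\pi {\rm i} n\theta} a\fancyJ^n$ again lies in $\fancyI_\fancyJ$ whenever $a \in I_\fancyJ$, and $\fancyI_\fancyJ$ is generated by such elements together with their adjoints, $\rho_\theta$ preserves $\fancyI_\fancyJ$ and descends to automorphisms $\bar\rho_\theta$ of $A_\fancyJ/\fancyI_\fancyJ$ with $\bar\rho_\theta(\bar\fancyJ) = {\rm e}^{2\pi {\rm i}\theta}\bar\fancyJ$ and $\bar\rho_\theta|_{\bar q(B_\fancyJ/I_\fancyJ)} = \mathrm{id}$, strong continuity being inherited from $\rho_\theta$. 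This is exactly a gauge action intertwining the canonical one on the crossed product, so the criterion forces $\Phi$ to be faithful; together with surjectivity this yields the isomorphism.

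I expect the only genuinely delicate point to be the equality $B_\fancyJ \cap \fancyI_\fancyJ = I_\fancyJ$, which rests on the Fourier-coefficient machinery being valid on all of $A_\fancyJ$ (uniqueness of the coefficients and their recovery of $x$ via Cesàro summation). Once that identity is secured, both the coefficient-faithfulness of $\bar q$ and the descent of the gauge action are routine, and the faithfulness criterion closes the argument.
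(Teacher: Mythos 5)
Your proposal is correct and follows essentially the same route as the paper's proof: the map is obtained from the universal property of Stacey's crossed product, injectivity on the coefficient algebra is established via the uniqueness of Fourier coefficients (giving $B_\fancyJ \cap \fancyI_\fancyJ = I_\fancyJ$), and faithfulness of the whole map follows from the gauge action descending to $A_\fancyJ/\fancyI_\fancyJ$ together with the criterion of Boyd--Keswani--Raeburn, with surjectivity read off from the generators. The only difference is cosmetic: you spell out the averaging argument for why the Fourier coefficients of an element of $B_\fancyJ$ vanish for $n\neq 0$, which the paper leaves implicit.
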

\begin{proof}
$B_\fancyJ \rtimes_{[\alpha_\fancyJ]} \N$ is the universal $C^*$-algebra generated by a copy of $B_\fancyJ/I_\fancyJ$ and an isometry~$\fancyL_\fancyJ$ such that
$
\fancyL_\fancyJ[a]\fancyL_\fancyJ^* = [\alpha_\fancyJ][a]$.

Define the map $\varphi_\fancyJ\colon B_\fancyJ/I_\fancyJ\rtimes_{[\alpha_\fancyJ]}\N \to A_\fancyJ/\fancyI_\fancyJ$ on generators by
\begin{equation*}
\varphi_\fancyJ(a+I_\fancyJ) = a+\fancyI_\fancyJ\qquad\text{and}\qquad \varphi_\fancyJ(\fancyL_\fancyJ) = \fancyJ+\fancyI_\fancyJ.
\end{equation*}
This is well-defined as $I_\fancyJ \subseteq \fancyI_\fancyJ$. Since $\fancyJ$ is an isometry, so is $\varphi_\fancyJ(\fancyL_\fancyJ)$. Furthermore, the map~$\varphi_\fancyJ$ preserves the crossed product relation
\begin{equation*}
\varphi_\fancyJ(\fancyL_\fancyJ)\varphi_\fancyJ(a+I_\fancyJ)\varphi_\fancyJ(\fancyL_\fancyJ)^* = \fancyJ a \fancyJ^* + \fancyI_\fancyJ = \alpha_\fancyJ(a)+\fancyI_\fancyJ = [\alpha_\fancyJ]\varphi_\fancyJ(a+I_\fancyJ).
\end{equation*}
By universality, $\varphi_\fancyJ$ extends to a homomorphism from $B_\fancyJ/I_\fancyJ\rtimes_{[\alpha_\fancyJ]}\N$ to $A_\fancyJ/\fancyI_\fancyJ$.

By uniqueness of the Fourier coefficients of $a$, see equation \eqref{ancoeff}, we see that if $a\in \fancyI_\fancyJ$ with~${a\in B_\fancyJ}$, then $a\in I_\fancyJ$. This means that the map $\varphi_\fancyJ$ is faithful on $B_\fancyJ/I_\fancyJ$. To verify the map is faithful on the whole crossed product, we observe that the algebra $A_\fancyJ/\fancyI_\fancyJ$ admits a gauge action.
Indeed, since $\rho_\theta$ preserves $I_\fancyJ \subseteq B_\fancyJ$ it factors to the quotient $[\rho]_\theta \colon A_\fancyJ/\fancyI_\fancyJ \to A_\fancyJ/\fancyI_\fancyJ$.
We have that $[\rho]_\theta $ is a 1-parameter family of automorphisms with
\begin{equation*}
[\rho]_\theta (a+\fancyI_\fancyJ) = a+\fancyI_\fancyJ \quad \text{for } a\in B_\fancyJ ,\qquad
[\rho]_\theta (\fancyJ+\fancyI_\fancyJ) = {\rm e}^{2\pi {\rm i} \theta}\fancyJ + \fancyI_\fancyJ
\end{equation*}
and therefore $\varphi_\fancyJ$ is faithful by~\cite{BKR}.

Finally, notice that $\varphi_\fancyJ$ is onto because its range is closed and it contains the generators of~$A_\fancyJ/\fancyI_\fancyJ$.
\end{proof}

As a consequence, we get the following short exact sequence of $C^*$-algebras, which is the basis for all structural results in the following sections:
\begin{equation}\label{SESequence}
0\to \mathcal{I}_\mathcal{J}\to A_\mathcal{J}\to B_\fancyJ/I_\fancyJ \rtimes_{[\alpha_\fancyJ]} \N\to 0.
\end{equation}

\section{The Bunce--Deddens shift}
\subsection[The algebra A\_U]{The algebra $\boldsymbol{A_U}$}
In this section, we consider the first shift algebra
$A_U=C^*(U,M_f\mid f\in C(\Z_s))$,
where the shift~$U$ is given on the basis elements of $H=\ell^2(\mathcal{V})$ by $UE_{(n,x)}= E_{(n+1, x+1)}$. The corresponding maps on $A_U$ are
$\alpha_U(a):=U aU^* \in A_U $ and $\beta_U(a):=U^* a U \in A_U$, for $a\in A_U$.

We define the maps $\mathfrak{a}_U$ and $\mathfrak{b}_U$ on $C(\Z_s)$ as follows:
$\mathfrak{a}_Uf(x) = f(x-1)$ and $\mathfrak{b}_Uf(x) = f(x+1) $.
Both are automorphisms on $C(\Z_s)$ and we have
$
\mathfrak{a}_U(1)=\mathfrak{b}_U(1)=1$ and ${\mathfrak{a}_U\circ\mathfrak{b}_U=\mathfrak{b}_U\circ\mathfrak{a}_U=1}$.
The operators $U$ and $M_f$ satisfy the following relations:
\[
M_fU=UM_{\mathfrak{b}_Uf}\qquad \text{and} \qquad \alpha_U(M_f)=\allowbreak M_{\mathfrak{a}_Uf}UU^* .
\]
It follows additionally that
$
UM_f=M_{\mathfrak{a}_Uf}U$ and $ \beta_U(M_f) = M_{\mathfrak{b}_U f}$.

In parallel, we will also describe and analyze the subalgebras
\[
A_{U,n}:=C^*(U,M_{(n,x)}\mid 0\le x < s^n),
\]
where $M_{(n,x)}$ are the multiplication operators by the characteristic functions of balls
\begin{equation*}
M_{(n,x)}E_{(m,y)} =
\begin{cases}
E_{(m,y)} &\textrm{if}\ |y-x|_s \le s^{-n},\\
0 &\textrm{otherwise}.
\end{cases}
\end{equation*}
We have the following formulas:
\begin{equation*}
\mathfrak{a}_UM_{(n,x)} = M_{(n,x+1)}\qquad\text{and}\qquad \mathfrak{b}_UM_{(n,x)} = M_{(n,x-1)} .
\end{equation*}
Notice that $A_{U,n}$ is a subalgebra of $A_{U,n+1}$ and $A_U$ is the $C^*$-algebras direct limit of $A_{U,n}$ via the inclusion maps
$A_U = \smash{\lim\limits_{\longrightarrow} A_{U,n}}$.

\subsection[The coefficient algebra of A\_U]{The coefficient algebra of $\boldsymbol{A_U}$}
%\subsubsection{}

Define the mutually orthogonal projections $\{P_n\}_{n\ge 0}$ by
$P_0=I-UU^*$ and $ P_n = U^nP_0(U^*)^{n}$.
By \cite[Proposition 4.2]{HKMP3}, the $C^*$-algebra
\begin{equation*}
B_U:=C^*(M_f,P_n\mid f\in C(\Z_s),\,n\in\Z_{\geq 0})
\end{equation*}
is the smallest coefficient algebra of $A_U$ containing $C(\Z_s)$. The algebra $B_U$ is commutative and by \cite[Proposition 4.3]{HKMP3} every element of $B_U$ can be uniquely written as
\begin{equation*}
T_U(F):= \sum_{n=0}^{\infty}(M_{f_n}-M_{f_\infty})P_n+M_{f_\infty},
\end{equation*}
where $F=(f_n)_{n\in \Z_{\ge0}}\in c(\Z_{\ge0}, C(\Z_s))$, the space of uniformly convergent sequences of continuous functions on $\Z_s$, and
\begin{equation}\label{finf}
C(\Z_s)\ni f_\infty:=\lim_{n\to\infty}f_n.
\end{equation}
We will refer to such a $T_U$ as a Toeplitz map.

Consequently,
$B_U$ is isomorphic to $c(\Z_{\ge0}, C(\Z_s))$.

Put $B_{U,n}:=C^*(P_k,M_{(n,x)}\mid k=0,1,\dots, \, 0\le x < s^n)$. This is an Abelian coefficient algebra for $A_{U,n}$. We can identify $B_{U,n}\cong c(\Z_{\ge 0},\C^{s^n})$, since $C^*(M_{(n,x)}\mid 0\le x < s^n)\cong \C^{s^n}$.

%\subsubsection{}
Consider the algebra $I_U$ generated by the products
\begin{equation*}
I_U:=C^*(M_fP_n\mid f\in C(\Z_s),\, n\in\Z_{\geq 0}).
\end{equation*}
Clearly, $I_U$ is an ideal in $B_U$ and, using the above Toeplitz map $T_U$, we see that we have an~identifi\-cation
$I_U\cong c_0(\Z_{\ge0}, C(\Z_s))$,
where $c_0(\Z_{\ge0}, C(\Z_s))$ is consisting of those sequences $F=(f_n)_{n\in \Z_{\ge0}}$ for which $f_\infty=0$.

Similarly, define
\[
I_{U,n}:= C^*(P_kM_{(n,x)} \mid k\in \Z_{\ge 0}, \, 0\le x < s^n) \cong c_0\bigl(\Z_{\ge 0},\C^{s^n}\bigr).
\]
Then $I_{U,n}$ is an ideal in $B_{U,n}$.

Consider the morphisms $\tilde{\alpha}_U$, $\tilde{\beta}_U$ on $c(\Z_{\ge0}, C(\Z_s))$ given by
\begin{equation*}
(\tilde{\alpha}_UF)_n(x) =
\begin{cases}
f_{n-1}(x-1) &\textrm{if }n\ge0, \\
0 &\textrm{if }n=0
\end{cases}\qquad\text{and}\qquad \bigl(\tilde{\beta}_UF\bigr)_n(x) = f_{n+1}(x+1) .
\end{equation*}
A simple calculation shows that we have the following relations:
\begin{equation*}
\alpha_U(T_U(F))=T_U\bigl(\tilde{\beta}_UF\bigr)\qquad\text{and}\qquad \beta_U(T_U(F))=T_U(\tilde{\alpha}_UF) .
\end{equation*}
It follows that the ideals $I_U$ and $I_{U,n}$ are $\alpha_U$ and $\beta_U$ invariant.

%\subsubsection{}
We have the following short exact sequence
\begin{equation*}
0\to c_0(\Z_{\ge0}, C(\Z_s))\to c(\Z_{\ge0}, C(\Z_s))\to C(\Z_s)\to 0,
\end{equation*}
where the quotient map is $F\mapsto f_\infty\in C(\Z_s)$.
It follows that the factor algebra $B_U/I_U$ is isomorphic to $C(\Z_s)$
$
B_U/I_U\cong C(\Z_s)$.

The morphisms $\tilde{\alpha}_U$, $\tilde{\beta}_U$ preserve the ideal $c_0(\Z_{\ge0}, C(\Z_s))$ and so they factor to morphisms on~$C(\Z_s)$ which are exactly the automorphisms $\mathfrak{a}_U$ and $\mathfrak{b}_U$. We also get
\[
B_{U,n}/I_{U,n} \cong C^*(M_{(n,x)}) \cong \C^{s^n}.
\]

\subsection[The Structure of A\_U]{The Structure of $\boldsymbol{A_U}$}
The Bunce--Deddens algebra $BD(s^\infty)$ corresponding to the supernatural number $s^\infty$, see \cite{BD1,BD2,KMP2}, is the following crossed product by an automorphism
$BD(s^\infty) = C(\Z_s)\rtimes_{\mathfrak{a}_U}\Z $.
The goal is to relate $A_U$ with the above Bunce--Deddens algebra.

From \cite[Theorem 4.4]{HKMP3}, we have an isomorphism of $C^*$-algebras
$A_U \cong c(\Z_{\ge0}, C(\Z_s))\rtimes_{\tilde{\alpha}_U}\N$.
We also have
$A_{U,n} \cong B_{U,n}\rtimes_{\alpha_U} \N$.

%\subsubsection{}
Consider the algebra
\[\mathcal{I}_U := C^*(aU^m \mid a\in I_U,\, m=0,1,\dots).\]
Similarly, we define
\[\mathcal{I}_{U,n} := C^*(aU^m \mid a\in I_{U,n},\, m=0,1,\dots).\]
Then from Section~\ref{sec3}, we know that $\mathcal{I}_U$ is an $\alpha_U$ and $\beta_U$ invariant ideal in $A_U$, while $\mathcal{I}_{U,n}$ is an invariant ideal in~$A_{U,n}$.

There is an alternative, equivalent definition of $\mathcal{I}_U$ which we briefly describe below even though it is not used in subsequent discussion. This alternative description does, however, illustrate some advantages when dealing with universal objects.

Let $\{E_l\}$ be the canonical basis in $\ell^2(\Z)$. Consider the map $c(\Z_{\ge0}, C(\Z_s)) \to B\bigl(\ell^2(\Z)\bigr)$ defined by $F \mapsto m_F$, where $m_F$ is a bounded operator on $\ell^2(\Z)$ given by
$
m_F E_l = f_\infty(l) E_l$.
Additionally, let $u \in B\bigl(\ell^2(\Z)\bigr)$ denote the standard bilateral shift
$
uE_l = E_{l+1}$.
It is easy to check that sending~${U \mapsto u}$ and $M_F \mapsto m_F$ gives rise to a representation of $A_U \cong c(\Z_{\ge0}, C(\Z_s))\rtimes_{\tilde{\alpha}_U}\N$ in $B\bigl(\ell^2(\Z)\bigr)$ since $u$ and $m_F$ satisfy the crossed product relations. Denote this representation by $\pi_\infty$.
It is easily seen that $\mathcal{I}_U$ is the kernel of this representation
$
\mathcal{I}_U = \ker \pi_\infty$.

Next, using a convenient representation, we deduce the structure of the ideal $\mathcal{I}_U$ as described in the following statement.

\begin{Proposition}\label{IUStructure}
Let $\mathcal{K}$ be the $C^*$-algebra of compact operators on a separable Hilbert space.
We get the following isomorphisms of $C^*$-algebras
$
\mathcal{I}_U\cong C(\Z_s)\otimes\mathcal{K}
$
and
$
\fancyI_{U,n}\cong \C^{s^n}\otimes \kcal$.
\end{Proposition}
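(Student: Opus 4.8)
The plan is to exhibit $\mathcal{I}_U$ as a matrix algebra over its $(0,0)$-corner and then invoke the standard stabilization isomorphism for systems of matrix units. First I would set $e_{k,l}:=U^kP_0(U^*)^l$ for $k,l\ge0$. Using $U^*U=I$ one checks $P_0U=(I-UU^*)U=0$ and hence $U^*P_0=0$; from these the product $(U^*)^lU^{k'}$ collapses to $I$ when $l=k'$ and otherwise to a one-sided power of $U$ or $U^*$ that is annihilated by $P_0$, so that $e_{k,l}e_{k',l'}=\delta_{l,k'}e_{k,l'}$ and $e_{k,l}^*=e_{l,k}$. Thus $\{e_{k,l}\}$ is a system of matrix units with $e_{k,k}=P_k$, and telescoping gives $\sum_{k\le N}P_k=I-U^{N+1}(U^*)^{N+1}\to I$ strongly, i.e.\ $\sum_kP_k=I$ strictly in the multiplier algebra. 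Since $P_0\in I_U$ and $\mathcal{I}_U$ is an ideal in $A_U$ containing $P_0U^m$, each $e_{k,l}$ lies in $\mathcal{I}_U$, and likewise $M_fe_{k,l}\in\mathcal{I}_U$.

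Next I would identify $\mathcal{I}_U=\overline{\operatorname{span}}\{M_fe_{k,l}\mid f\in C(\Z_s),\,k,l\ge0\}$. Indeed $I_U=\overline{\operatorname{span}}\{M_fP_n\}$, and computing $(U^*)^nU^m$ rewrites the generators as $M_fP_nU^m=M_fe_{n,n-m}$ for $m\le n$ and $0$ for $m>n$; closure of the span under products and adjoints follows from the commutation relations $M_fU=UM_{\mathfrak{b}_Uf}$, $UM_f=M_{\mathfrak{a}_Uf}U$ together with commutativity of $B_U$, which yield the twisted rule $(M_ge_{k,l})(M_he_{k',l'})=\delta_{l,k'}M_{g\cdot\mathfrak{a}_U^k\mathfrak{b}_U^lh}e_{k,l'}$. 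I would then compute the corner: since $P_0e_{k,l}P_0=\delta_{k,0}\delta_{l,0}P_0$ and $M_f$ commutes with $P_0$, one gets $P_0\mathcal{I}_UP_0=\{M_fP_0\mid f\in C(\Z_s)\}$, and $f\mapsto M_fP_0$ is a $*$-isomorphism onto this corner. The crucial point is isometry: $\|M_fP_0\|=\sup|f(x)|$ over the centers $x$ of the balls spanning $P_0H$, namely $\{0\}\cup\{x:s^{n-1}<x<s^n,\ n\ge1\}$, which consists of $0$ together with every positive integer that is not a power of $s$. As $\Z_{\ge0}$ is dense in $\Z_s$ and only this sparse set is omitted, the center set is dense, so the map is isometric and $P_0\mathcal{I}_UP_0\cong C(\Z_s)$.

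Finally I would apply the stabilization theorem for matrix units: whenever a $C^*$-algebra $A$ carries matrix units $\{e_{k,l}\}_{k,l\ge0}$ with $\sum_ke_{k,k}=1$ strictly, the map $a\mapsto\sum_{k,l}(e_{0,k}ae_{l,0})\otimes\widehat{e}_{k,l}$ is a $*$-isomorphism $A\cong e_{0,0}Ae_{0,0}\otimes\mathcal{K}$. Applied here this gives $\mathcal{I}_U\cong C(\Z_s)\otimes\mathcal{K}$; note that the twisting of $M_f$ by $\mathfrak{a}_U,\mathfrak{b}_U$ is absorbed automatically into the corner elements $e_{0,k}M_ge_{l,0}$, so no explicit untwisting unitary is needed. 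The statement for $A_{U,n}$ is proved verbatim, replacing $C(\Z_s)$ by $C^*(M_{(n,x)}\mid0\le x<s^n)\cong\C^{s^n}$: the corner is $P_0\mathcal{I}_{U,n}P_0\cong\C^{s^n}$, the nontriviality of all $s^n$ minimal projections again following from density of the center set (every level-$n$ ball meets it), and the stabilization theorem yields $\mathcal{I}_{U,n}\cong\C^{s^n}\otimes\mathcal{K}$. The \textbf{main obstacle} is the corner identification $P_0\mathcal{I}_UP_0\cong C(\Z_s)$, i.e.\ showing the representation of $C(\Z_s)$ on $P_0H$ is faithful; everything else is bookkeeping with the matrix-unit relations.
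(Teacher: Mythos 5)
Your argument is correct, but it follows a genuinely different route from the paper. The paper's proof changes representation: it realizes $A_U\cong c(\Z_{\ge0},C(\Z_s))\rtimes_{\tilde\alpha_U}\N$ faithfully on $\ell^2(\Z\times\Z_{\ge0})$ (faithfulness coming from the gauge action via the O'Donovan criterion of Section~\ref{sec3}), where $\mathcal{M}_{\tilde fc}$ visibly factors as a product $\mu_f\nu_c$ of commuting families, with $C^*(\mu_f)\cong C(\Z_s)$ and $C^*(\mathcal{U}^m\nu_c)\cong\kcal\bigl(\ell^2(\Z_{\ge0})\bigr)$; the tensor decomposition is then read off directly, with no twisting to absorb. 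You instead stay in the defining representation, exhibit the matrix units $e_{k,l}=U^kP_0(U^*)^l$ with $\sum_kP_k$ converging strictly to the identity, identify the corner $P_0\mathcal{I}_UP_0$ with $C(\Z_s)$, and invoke the stabilization lemma for systems of matrix units. Your computations check out: $P_0U=0$ gives the matrix-unit relations, the twisted product rule $(M_ge_{k,l})(M_he_{k',l'})=\delta_{l,k'}M_{g\cdot\mathfrak{a}_U^k\mathfrak{b}_U^lh}e_{k,l'}$ is right and the twisting is indeed harmless because it lands in the corner as $M_{\mathfrak{b}_U^kg}P_0$, and you correctly isolate the one nontrivial analytic point, namely that the centers of the balls spanning $P_0H$ (zero together with the positive integers that are not powers of $s$) form a dense subset of $\Z_s$, so that $f\mapsto M_fP_0$ is isometric; the same density gives $M_{(n,x)}P_0\ne0$ for every $x$, yielding the $\C^{s^n}$ corner for $\fancyI_{U,n}$. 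The trade-off: the paper's approach leverages machinery it has already built (and reuses the same representation for the Serre shift in Proposition~\ref{WAlgebraQuotient}), obtaining an untwisted factorization for free, while yours is more elementary and self-contained but must justify the strict convergence of $\sum_kP_k$ in $M(\mathcal{I}_U)$ and carry out the density argument explicitly. Both are complete proofs.
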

\begin{proof}
We faithfully represent $A_U$ in $\ell^2(\Z\times\Z_{\ge0})$, which is a more useful representation than the defining representation in $\ell^2(\mathcal{V})$.
If $\{E_{k,l}\}$ is the canonical basis in $\ell^2(\Z\times\Z_{\ge0})$, then the new representation of $A_U$ is given by operators $\mathcal{U}$ and $\mathcal{M}_F$ for functions $F=(f_n)_{n\in \Z_{\ge0}}\in c(\Z_{\ge0}, C(\Z_s))$ given by the following formulas:
\begin{equation*}
\mathcal{U}E_{k,l}=E_{k, l+1} ,\qquad \mathcal{U}^*E_{k,l} =
\begin{cases}
E_{k, l-1} &\textrm{if}\ l\ge1,\\
0 &\textrm{if}\ l=0,
\end{cases}\qquad\text{and}\qquad \mathcal{M}_FE_{k,l} = f_l(k+l)E_{k,l} .
\end{equation*}
Note that $\mathcal{U}$ is the unilateral shift in the second coordinate and these operators satisfy the defining relations for the crossed product $c(\Z_{\ge0}, C(\Z_s))\rtimes_{\tilde{\alpha}_U}\N$. Additionally, the following diagonal unitary operators on $\ell^2(\Z\times\Z_{\ge0})$ given on basis elements by
\begin{equation}\label{repgauge}
\mathbb{U}_\theta E_{k,l}={\rm e}^{2\pi {\rm i}\theta l}E_{k,l}
\end{equation}
with $\theta\in \R/\Z$ satisfy the following relations
$\mathbb{U}_\theta \mathcal{U} \mathbb{U}_\theta^{-1} = {\rm e}^{2\pi {\rm i}\theta}\mathcal{U}$ and $ \mathbb{U}_\theta \mathcal{M}_F \mathbb{U}_\theta^{-1} = \mathcal{M}_F
$
for every~${F\in c(\Z_{\ge0}, C(\Z_s))}$. Thus $\mathbb{U}_\theta$ implements the gauge action in this representation. Since $F\mapsto \mathcal{M}_F$ is faithful, it follows from the previous section that the above representation is a~faithful representation of the crossed product $A_U \cong c(\Z_{\ge0}, C(\Z_s))\rtimes_{\tilde{\alpha}_U}\N$.

For $f\in C(\Z_s)$, let $\tilde f$ be a bounded sequence with values in $C(\Z_s)$ given by the formula $
\tilde f_n(x):=f(x-n)$.
If additionally $c\in c_0(\Z_{\ge0})$, then, by the Stone-Weierstrass theorem, the linear combinations of sequences of the form
$\bigl(\tilde fc\bigr)_n(x)= \tilde f_n(x)c(n)$
are dense in $c_0(\Z_{\ge0}, C(\Z_s))$. Computing on basis elements we get
$\mathcal{M}_{\tilde fc} E_{k,l} = f(k)c(l) E_{k,l}$.
Denoting $\mu_{f} E_{k,l} = f(k) E_{k,l}$ and~${ \nu_{c} E_{k,l} = c(l) E_{k,l}}$,
we see that $\mathcal{M}_{\tilde fc}=\mu_{f}\nu_{c}$. Moreover, for all $f\in C(\Z_s)$ and all $c\in c_0(\Z_{\ge0})$ the operators $\mu_f$ commute with $\nu_{c}$ and with $\mathcal{U}$. Clearly,
$C^*(\mu_f\mid f\in C(\Z_s))\cong C(\Z_s)
$
and, since both $\mathcal{U}$ and $\nu_{c}$ depend only on the second coordinate $l\in\Z_{\ge0}$, we have
\begin{equation*}
C^*(\mathcal{U}^m\nu_{c}\mid m\in \Z_{\ge0},\, c\in c_0(\Z_{\ge0}))\cong \mathcal{K}\bigl(\ell^2(\Z_{\ge0})\bigr),
\end{equation*}
where $\mathcal{K}\bigl(\ell^2(\Z_{\ge0})\bigr)$ is the algebra of compact operators on $\ell^2(\Z_{\ge0})$. Finally, this implies that
\begin{align*}
\mathcal{I}_U&\cong
C^*(\mathcal{U}^m \mathcal{M}_F\mid m\in \Z_{\ge0},\, F\in c_0(\Z_{\ge0}, C(\Z_s))\\
&=C^*(\mathcal{U}^m\nu_{c}\mu_f\mid m\in \Z_{\ge0},\, c\in c_0(\Z_{\ge0}),\, f\in C(\Z_s))\cong C(\Z_s)\otimes \mathcal{K}\bigl(\ell^2(\Z_{\ge0})\bigr).
\end{align*}

Restricting $f$'s in the above argument from $C(\Z_s)$ to the subalgebra $C^*(M_{(n,x)})$, we get the identification $\fancyI_{U,n}\cong \C^{s^n}\otimes \mathcal{K}\bigl(\ell^2(\Z_{\ge0})\bigr)$.
\end{proof}

%\subsubsection{}
Consider the quotient $A_U/\mathcal{I}_U$ and the quotient $A_{U,n}/\fancyI_{U,n}$ which, by Proposition \ref{FactorCross}, are isomorphic respectively to the crossed product of $B_U/I_U$ and the crossed product of $B_{U,n}/I_{U,n}$ by the factor monomorphism $[\alpha_U]$. Since we identified $B_U/I_U$ with $C(\Z_s)$ and the endomorphism~$[\alpha_U]$ with $\mathfrak{a}_U$, we get the following immediate consequence of the definition of the Bunce--Deddens algebra:
\begin{Proposition}\label{UAlgebraQuotient}
The Bunce--Deddens algebra $BD(s^\infty)$ is isomorphic to $A_U/\mathcal{I}_U$.
\end{Proposition}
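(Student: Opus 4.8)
The plan is to obtain the result by chaining together the identifications already recorded just above the statement. By Proposition~\ref{FactorCross} applied to the invariant ideal $I_U$, we have $A_U/\mathcal{I}_U \cong B_U/I_U \rtimes_{[\alpha_U]} \N$. I would then substitute the two identifications established in the previous subsection, namely $B_U/I_U \cong C(\Z_s)$ together with the fact that under this isomorphism the factor endomorphism $[\alpha_U]$ becomes the map $\mathfrak{a}_U$. This immediately yields $A_U/\mathcal{I}_U \cong C(\Z_s) \rtimes_{\mathfrak{a}_U} \N$, so that everything reduces to passing from the crossed product by $\N$ to the crossed product by $\Z$ which, by definition, is $BD(s^\infty) = C(\Z_s)\rtimes_{\mathfrak{a}_U}\Z$.

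The key step is the observation that $\mathfrak{a}_U$ is not merely a monomorphism with hereditary range but an honest automorphism of $C(\Z_s)$, with $\mathfrak{a}_U(1)=1$. Recall that $C(\Z_s)\rtimes_{\mathfrak{a}_U}\N$ is the universal $C^*$-algebra generated by a copy of $C(\Z_s)$ and an isometry $\mathcal{L}$ satisfying $\mathcal{L} f\mathcal{L}^* = \mathfrak{a}_U(f)$ for all $f\in C(\Z_s)$. Taking $f=1$ gives $\mathcal{L}\mathcal{L}^* = \mathfrak{a}_U(1) = 1$, and combined with the isometry relation $\mathcal{L}^*\mathcal{L}=1$ this forces $\mathcal{L}$ to be a unitary. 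Consequently the defining generators and relations of $C(\Z_s)\rtimes_{\mathfrak{a}_U}\N$ coincide with those of the group crossed product $C(\Z_s)\rtimes_{\mathfrak{a}_U}\Z$, i.e.\ the universal $C^*$-algebra generated by $C(\Z_s)$ and a unitary $u$ with $u f u^* = \mathfrak{a}_U(f)$, and the two universal objects therefore agree. Since $BD(s^\infty)=C(\Z_s)\rtimes_{\mathfrak{a}_U}\Z$ by definition, this completes the identification $A_U/\mathcal{I}_U\cong BD(s^\infty)$.

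I do not expect a serious obstacle here, and indeed the authors flag the statement as immediate; the only point requiring genuine care is the final universal-property matching, namely verifying that when the defining endomorphism is surjective (equivalently unital with trivial kernel) the generating isometry of Stacey's multiplicity-one $\N$-crossed product is automatically a unitary, so that the $\N$- and $\Z$-crossed products have literally the same presentation. Everything else in the argument is a direct substitution of the isomorphisms $B_U/I_U\cong C(\Z_s)$ and $[\alpha_U]\leftrightarrow\mathfrak{a}_U$ into the short exact sequence machinery from Section~\ref{sec3}.
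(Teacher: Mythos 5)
Your proposal is correct and follows essentially the same route as the paper: the authors likewise combine Proposition~\ref{FactorCross} with the identifications $B_U/I_U\cong C(\Z_s)$ and $[\alpha_U]=\mathfrak{a}_U$ and then declare the result an immediate consequence of the definition of $BD(s^\infty)$. The only difference is that you explicitly justify the passage from $C(\Z_s)\rtimes_{\mathfrak{a}_U}\N$ to $C(\Z_s)\rtimes_{\mathfrak{a}_U}\Z$ by noting that $\mathcal{L}\mathcal{L}^*=\mathfrak{a}_U(1)=1$ forces the generating isometry to be unitary, a detail the paper leaves implicit; this is a correct and worthwhile elaboration rather than a deviation.
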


Additionally, using \cite[Proposition 2.9]{KMRSW2}, we get
\begin{equation*}
A_{U,n}/\fancyI_{U,n}\cong C^*(M_{(n,x)})\rtimes_{\mathfrak{a}_u} \Z = BD(s^n)\cong M_{s^n}(\C)\otimes C\bigl(S^1\bigr).
\end{equation*}

The main result of this section in the theorem below is a direct consequence of equation \eqref{SESequence}, Propositions \ref{IUStructure} and \ref{UAlgebraQuotient}.
\begin{Theorem}
We have the short exact sequences of $C^*$-algebras
\begin{gather}
0\rightarrow C(\Z_s)\otimes\mathcal{K}\rightarrow A_U\rightarrow BD(s^\infty)\rightarrow 0,\nonumber
\\
0 \to \C^{s^n}\otimes \mathcal{K}\to A_{U,n} \to M_{s^n}(\C)\otimes C\bigl(S^1\bigr) \to 0.\label{AUExact}
\end{gather}
\end{Theorem}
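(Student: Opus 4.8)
The plan is to obtain both rows by specializing the general short exact sequence \eqref{SESequence} to the Bunce--Deddens setting, where $\mathcal{J}=U$, $B_\fancyJ=B_U$, and $I_\fancyJ=I_U$, and then substituting the concrete identifications of the ideal and of the quotient that have already been established. First I would note that the hypotheses of \eqref{SESequence} are in place: $I_U$ (respectively $I_{U,n}$) is an $\alpha_U$- and $\beta_U$-invariant ideal in $B_U$ (respectively $B_{U,n}$), as recorded earlier via the intertwining relations $\alpha_U(T_U(F))=T_U(\tilde{\beta}_U F)$ and $\beta_U(T_U(F))=T_U(\tilde{\alpha}_U F)$ together with the fact that $\tilde{\alpha}_U$ and $\tilde{\beta}_U$ preserve $c_0(\Z_{\ge0}, C(\Z_s))$. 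Thus \eqref{SESequence} already furnishes the exact row $0\to \mathcal{I}_U\to A_U\to B_U/I_U\rtimes_{[\alpha_U]}\N\to 0$, with the inclusion and quotient maps supplied abstractly.

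For the first sequence I would then replace the two outer terms by isomorphic algebras. Proposition~\ref{IUStructure} gives $\mathcal{I}_U\cong C(\Z_s)\otimes\mathcal{K}$ for the ideal. For the quotient, Proposition~\ref{UAlgebraQuotient}, which rests on the identification of $B_U/I_U$ with $C(\Z_s)$ and of the factor endomorphism $[\alpha_U]$ with the automorphism $\mathfrak{a}_U$, identifies $A_U/\mathcal{I}_U$ with the crossed product $C(\Z_s)\rtimes_{\mathfrak{a}_U}\Z=BD(s^\infty)$. Substituting these into the abstract row yields the first exact sequence.

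For the second sequence I would run the identical argument at finite level, applying \eqref{SESequence} with $B_{U,n}$ and $I_{U,n}$ in place of $B_U$ and $I_U$. The ideal term is handled by the second assertion of Proposition~\ref{IUStructure}, namely $\fancyI_{U,n}\cong\C^{s^n}\otimes\mathcal{K}$, while the quotient is identified through the displayed computation $A_{U,n}/\fancyI_{U,n}\cong C^*(M_{(n,x)})\rtimes_{\mathfrak{a}_U}\Z=BD(s^n)$, which by \cite[Proposition 2.9]{KMRSW2} is isomorphic to $M_{s^n}(\C)\otimes C\bigl(S^1\bigr)$. Substituting gives the row \eqref{AUExact}.

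Because the analytic content has already been extracted---Proposition~\ref{IUStructure} through a faithful representation on $\ell^2(\Z\times\Z_{\ge0})$ combined with the gauge-action faithfulness criterion, and Proposition~\ref{UAlgebraQuotient} through the crossed-product identification of Proposition~\ref{FactorCross}---the theorem reduces to bookkeeping and I anticipate no serious obstacle. The one point meriting attention is to confirm that the finite-level row uses the correct endomorphism: one must verify that the factor map $[\alpha_U]$ on $B_{U,n}/I_{U,n}\cong C^*(M_{(n,x)})$ is again the restriction of $\mathfrak{a}_U$, so that the quotient really is the finite Bunce--Deddens algebra $BD(s^n)$, before invoking \cite[Proposition 2.9]{KMRSW2} for its matrix-over-$C\bigl(S^1\bigr)$ form.
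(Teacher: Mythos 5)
Your proposal is correct and follows essentially the same route as the paper, which derives both rows directly from the general short exact sequence \eqref{SESequence} combined with Propositions \ref{IUStructure} and \ref{UAlgebraQuotient} (and, at finite level, the identification $A_{U,n}/\fancyI_{U,n}\cong BD(s^n)\cong M_{s^n}(\C)\otimes C\bigl(S^1\bigr)$ via \cite[Proposition 2.9]{KMRSW2}). Your extra check that the factor endomorphism on $B_{U,n}/I_{U,n}$ is again $\mathfrak{a}_U$ is sensible bookkeeping but introduces nothing beyond what the paper already records.
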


\subsection[K-theory of A\_U]{$\boldsymbol{K}$-theory of $\boldsymbol{A_U}$}
The results of the previous subsection can be used to describe the $K$-theory of $A_U$, which we do in the following proposition.
\begin{Proposition}
The $K$-theory of $A_U$ satisfies the following: $K_1(A_U) \cong 0$ and
\begin{equation*}
0 \to C(\Z_s, \Z)/\Z \to K_0(A_U) \to G_{s^\infty} \to 0,
\end{equation*}
where $G_{s^\infty} = \bigl\{ \frac{k}{s^n} \in \Q \mid k \in \Z,\, n \in \N \bigr\}$ and the subgroup of $\Z$ in $C(\Z_s,\Z)$ is the subgroup of constant functions.
\end{Proposition}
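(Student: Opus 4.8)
The plan is to apply the six-term exact sequence in $K$-theory to the short exact sequence
\[
0 \to C(\Z_s)\otimes\kcal \xrightarrow{\iota} A_U \xrightarrow{\pi} BD(s^\infty) \to 0
\]
from the preceding theorem, and then identify each of the connecting maps. First I would record the $K$-groups of the two outer algebras. Since $\Z_s$ is a Cantor set, $C(\Z_s)$ is an AF algebra with $K_0(C(\Z_s)) \cong C(\Z_s,\Z)$ (continuous, i.e.\ locally constant, integer-valued functions) and $K_1(C(\Z_s)) = 0$; stability of $K$-theory then gives $K_0(C(\Z_s)\otimes\kcal) \cong C(\Z_s,\Z)$ and $K_1(C(\Z_s)\otimes\kcal) = 0$. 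For the Bunce--Deddens algebra the $K$-theory is classical, $K_0(BD(s^\infty)) \cong G_{s^\infty} = \Z[1/s]$ and $K_1(BD(s^\infty)) \cong \Z$, and one can recover it from the Pimsner--Voiculescu sequence for $C(\Z_s)\rtimes_{\mathfrak{a}_U}\Z$: minimality of the odometer forces $1-\mathfrak{a}_{U*}$ on $C(\Z_s,\Z)$ to have kernel exactly the constants $\Z$ (giving $K_1 \cong \Z$) and cokernel $\Z[1/s]$ (giving $K_0$).

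Because $K_1$ of the ideal vanishes, the six-term sequence collapses. The exponential map out of $K_0(BD(s^\infty))$ lands in $K_1(C(\Z_s)\otimes\kcal) = 0$, so $\pi_*\colon K_0(A_U)\to G_{s^\infty}$ is surjective; and at $K_1(A_U)$ the segment $0 = K_1(C(\Z_s)\otimes\kcal) \to K_1(A_U) \xrightarrow{\pi_*} \Z \xrightarrow{\partial_1} C(\Z_s,\Z)$ forces $\pi_*$ to be injective on $K_1(A_U)$ with image $\ker\partial_1$. Everything therefore reduces to computing the index map $\partial_1\colon K_1(BD(s^\infty)) = \Z \to K_0(C(\Z_s)\otimes\kcal) = C(\Z_s,\Z)$.

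The key computation is this index map. The generator of $K_1(BD(s^\infty))$ is the class of the canonical unitary implementing $\mathfrak{a}_U$ (under the PV boundary map it hits the constant $1\in C(\Z_s,\Z)$), and this unitary is precisely $\pi(U)$. Since $U$ is an isometry with $U^*U = I$ and $I-UU^* = P_0$, the index map sends this generator to $-[P_0]$ (up to the usual sign convention). It then remains to identify $[P_0]$ under the isomorphism $K_0(\fancyI_U)\cong C(\Z_s,\Z)$: in the faithful representation on $\ell^2(\Z\times\Z_{\ge0})$ of Proposition~\ref{IUStructure}, $P_0 = I-\mathcal{U}\mathcal{U}^*$ is the projection onto $\{E_{k,0}\}_{k}$, i.e.\ $1_{C(\Z_s)}\otimes e_{00}$ under $\fancyI_U\cong C(\Z_s)\otimes\kcal$, so $[P_0]$ is the constant function $1$. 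Hence $\partial_1$ is the injection $\Z\hookrightarrow C(\Z_s,\Z)$ onto the subgroup of constants.

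Injectivity of $\partial_1$ immediately yields $K_1(A_U)\cong\ker\partial_1 = 0$. For $K_0$, exactness at $K_0(C(\Z_s)\otimes\kcal)$ shows $\iota_*$ has kernel $\operatorname{im}\partial_1 = \Z$ (the constants), inducing an injection $C(\Z_s,\Z)/\Z\hookrightarrow K_0(A_U)$ whose image is $\ker\pi_*$; combining this with surjectivity of $\pi_*$ onto $G_{s^\infty}$ gives the asserted sequence $0\to C(\Z_s,\Z)/\Z\to K_0(A_U)\to G_{s^\infty}\to 0$. The main obstacle is pinning down the index map precisely, that is, verifying both that $\pi(U)$ generates $K_1(BD(s^\infty))$ and that $[P_0]$ is exactly the constant function $1$; the \emph{direction} of these two identifications is what makes $\partial_1$ injective and thereby collapses $K_1(A_U)$ to zero while cutting out the constants in the $K_0$ computation.
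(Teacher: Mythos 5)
Your proposal is correct and follows essentially the same route as the paper: the six-term sequence for $0\to C(\Z_s)\otimes\kcal\to A_U\to BD(s^\infty)\to 0$, vanishing of the exponential map since $K_1$ of the ideal is zero, and the identification of the index map as sending the generator of $K_1(BD(s^\infty))\cong\Z$ to $-[P_0]$, i.e.\ the constant function $-1$ in $C(\Z_s,\Z)$, whence injectivity kills $K_1(A_U)$ and the quotient by the constants gives the stated extension. The only cosmetic differences are that you justify $K_*(BD(s^\infty))$ via Pimsner--Voiculescu and locate $[P_0]$ using the $\ell^2(\Z\times\Z_{\ge0})$ representation, where the paper cites the literature and a direct calculation, respectively.
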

\begin{proof}
The short exact sequence \eqref{AUExact} above induces the following $6$-term exact sequence in $K$-theory:
\begin{equation*}
\begin{tikzcd}
 K_0(\mathcal{I}_U) \arrow{r} & K_0(A_U) \arrow{r} & K_0(BD(s^\infty)) \arrow{d}{\textup{exp}} \\
 K_1(BD(s^\infty)) \arrow{u}{\textup{ind}} & K_1(A_U) \arrow{l} & K_1(\mathcal{I}_U) \arrow{l}.
\end{tikzcd}
\end{equation*}
Recall that $\mathcal{I}_U \cong C(\Z_s) \otimes \mathcal{K}$. By stability of $K$-theory, along with \cite[Exercise 3.4]{RLL}, and the fact that $\Z_s$ is totally disconnected, we have that
$
K_0(\mathcal{I}_U) \cong C(\Z_s,\Z)$.
Since $C(\Z_s)$ is AF (see \cite[Example III.2.5]{KD}),
$
K_1(\mathcal{I}_U) \cong 0$.
It follows that the exponential map is zero.

By \cite[Exercise 10.11.4\,(b)]{Blackadar}, also see \cite{KMP2},
$K_0(BD(s^\infty)) \cong G_{s^\infty}$ and $ K_1(BD(s^\infty)) \cong \Z$.
The generator of $K_1(BD(s^\infty))$ is a unitary and it lifts to the non-unitary isometry $U$ in $A_U$. By~\cite[Proposition~9.2.4]{RLL}, the index map sends the class of the generator of $K_1(BD(s^\infty))$ to the class of $UU^*-U^*U=-P_0$. The range of the map $\Z \to C(\Z_s , \Z)$ is generated by $-[I - UU^*]_0$, which is seen, by direct calculation, to correspond to the constant function $-1$ in $C(\Z_s, \Z)$. In particular, the kernel of the index map is zero. By exactness on the bottom row of the $6$-term exact sequence, we see that
$
K_1(A_U) \cong 0$.

Since the range of the index map is the subgroup of constant functions in $C(\Z_s, \Z)$, taking the quotient we can extract a short exact sequence of Abelian groups from the top row
\begin{equation*}
0 \to C(\Z_s, \Z)/\Z \to K_0(A_U) \to G_{s^\infty} \to 0.\tag*{\qed}
\end{equation*} \renewcommand{\qed}{}
\end{proof}

The exact sequence above does not split and so does not fully determine $K_0(A_U)$. In fact, as shown below in equation \eqref{star}, we can describe explicitly all the $K_0(A_U)$ classes but the group operation is non-obvious. To overcome this issue, we notice that $A_U$ is the direct limit of~$A_{U,n}$ and so we can, using the continuity of $K$-theory, write $K_0(A_U)$ as a direct limit of easier groups $K_0(A_{U,n})$, which turn out to be powers of $\Z$. We also calculate below the corresponding homomorphisms from $K_0(A_{U,n})$ to $K_0(A_{U,n+1})$.

First we compute the $K$-theory of $A_{U,n}$. This is done by the 6-term exact sequence in $K$-theory as above
\begin{equation*}
\begin{tikzcd}
\Z^{s^n} \cong K_0\bigl(\C^{s^n} \otimes \kcal\bigr) \arrow[r] & K_0(A_{U,n}) \arrow[r] & K_0\bigl(M_{s^n}(\C)\otimes C\bigl(S^1\bigr)\bigr) \cong \Z \arrow{d}{\textup{exp}=0} \\
 \Z \cong K_1\bigl(M_{s^n}(\C)\otimes C\bigl(S^1\bigr)\bigr) \arrow{u}{\textup{ind}} & K_1(A_{U,n})\cong0 \arrow[l] & K_1(\C^{s^n} \otimes \kcal)\cong0 \arrow[l,"0"].
\end{tikzcd}
\end{equation*}
We know that $K_1\bigl(\C^{s^n}\otimes \kcal\bigr)$ is isomorphic to 0, thus the exponential map is 0. We know that the unitary generator of $K_1\bigl(M_{s^n}(\C) \otimes C\bigl(S^1\bigr)\bigr)$ lifts to a~non-unitary in $A_{U,n}$, thus the index map is injective, and exactness of the sequence forces $K_1(A_{U,n})$ to be 0.
Consequently, we get a~simplified short exact sequence
\begin{equation*}
0\to \Z^{s^n}/\Z \to K_0(A_{U,n}) \to \Z \to 0,
\end{equation*}
where the subgroup $\Z$ in the quotient $\Z^{s^n}/\Z$ is the diagonal subgroup. Because the right-most nontrivial group is free, the sequence necessarily splits and thus
\begin{equation*}
K_0(A_{U,n})\cong \bigl(\Z^{s^n}/\Z\bigr) \oplus \Z \cong \Z^{s^n}.
\end{equation*}

The goal is to compute the homomorphisms $\phi_n \colon K_0(A_{U,n}) \to K_0(A_{U,n+1})$ induced by inclusions $A_{U,n}\subseteq A_{U,n+1}$. First notice that $[M_{(n,x)}]$ are minimal projections in
\begin{equation*}
A_{U,n}/\fancyI_{U,n}\cong C^*(M_{(n,x)})\rtimes_{\mathfrak{a}_u} \Z\cong M_{s^n}(\C)\otimes C\bigl(S^1\bigr),
\end{equation*}
and hence their classes give the generator of $K_0\bigl(M_{s^n}(\C)\otimes C\bigl(S^1\bigr)\bigr) \cong \Z$. Specifically, in the formulas below, we choose $[M_{(n,0)}]$ for the generator in $K_0(A_{U,n})$ corresponding to that generator in $K_0(A_{U,n}/\fancyI_{U,n})$.

Also, the projections $M_{(n,x)}$ generate $K_0$ classes of $C^*(M_{(n,x)}) \cong \C^{s^n}$ and, since $P_0$ is a~minimal projection in $\kcal$, the classes $[M_{(n,x)}P_0]$ generate $K_0(\fancyI_{U,n})=K_0(C^*(M_{(n,x)})\otimes \kcal)$. Since~${UU^*=I-P_0}$ and $U^*U=I$, the class of $P_0$ in $K_0(A_{U,n})$ is zero. In the above short exact sequence, this corresponds to the fact that the diagonal subgroup of $K_0(\fancyI_{U,n})$ gives a zero class in $K_0(A_{U,n})$. Thus, we choose $[M_{(n,x)}P_0]$ with $x\ne 0$ for generators of the range of the map~${ \Z^{s^n}/\Z \to K_0(A_{U,n})}$. To summarize, we get the following explicit description of $K_0(A_{U,n})$
\begin{equation}\label{star}
K_0(A_{U,n}) \cong \Biggl\{\sum_{j=1}^{s^n-1} c_j[M_{(n,j)}P_0] + k[M_{(n,0)}] \mid c_j,k\in \Z \Biggr\}.
\end{equation}

With those preliminaries we can now compute the maps $\phi_n \colon K_0(A_{U,n}) \to K_0(A_{U,n+1})$. Notice that the $M_{(n,x)}$'s are indicators for balls in $\Z_s$ so we can decompose these indicators as
\begin{equation*}
M_{(n,x)} = \sum_{l=0}^{s-1} M_{(n+1,x+ls^n)},
\end{equation*}
each of which are mutually orthogonal projections. Thus an element in equation \eqref{star} can be written as
\begin{equation}
\sum_{j=1}^{s^n-1} c_j[M_{(n,j)}P_0] + k[M_{(n,0)}] =
\sum_{j=1}^{s^n-1} \sum_{l=0}^{s-1} c_j [M_{(n+1,j+ls^n)}P_0] + k \sum_{l=0}^{s-1} [M_{(n+1,ls^n)}].\label{starstar}
\end{equation}
To compute $\phi_n$, we need to write the above as combinations of generators of $K_0(A_{U,n+1})$ as in equation~\eqref{star}.

Notice that by Murray--von~Neumann equivalence	
\begin{align*}
[M_{(n+1,0)}] &= \big[U^{ls^n} M_{(n+1,0)} (U^*)^{ls^n}\big] = \big[M_{(n+1,ls^n)} U^{ls^n}(U^*)^{ls^n}\big] \\
&= [M_{(n+1,ls^n)} ] - \big[M_{(n+1,ls^n)}(I-U^{ls^n}(U^*)^{ls^n})\big].
\end{align*}
Thus equation \eqref{starstar} may be written as
\begin{equation}\label{buttholesurfers}
\sum_{j=1}^{s^n-1} \sum_{l=0}^{s-1} c_j [M_{(n+1,j+ls^n)} P_0] + ks[M_{(n+1,0)}] + k \sum_{l=1}^{s-1} \big[M_{(n+1,ls^n)}(I-U^{ls^n}(U^*)^{ls^n})\big].
\end{equation}
Further, notice that for $l>0$
\begin{equation*}
I-U^{ls^n}(U^*)^{ls^n} = P_0 + P_1 + \cdots + P_{ls^n-1}= \sum_{i=0}^{ls^n-1} U^i P_0 (U^*)^i.
\end{equation*}
By using Murray--von~Neumann equivalence, the last sum in equation \eqref{buttholesurfers} is equal to
\begin{equation*}
k\sum_{l=1}^{s-1} \sum_{i=0}^{ls^n-1}\big[M_{(n+1,ls^n)}U^i P_0 (U^*)^i\big]
=k\sum_{l=1}^{s-1} \sum_{i=0}^{ls^n-1}[M_{(n+1,ls^n-i)}P_0]
\end{equation*}
and, reindexing $ls^n-i \to i$, we get equation \eqref{buttholesurfers} as
\begin{equation*}
\sum_{j=1}^{s^n-1} \sum_{l=0}^{s-1} c_j[M_{(n+1,j+ls^n)}P_0] + ks[M_{(n+1,0)}] + k\sum_{l=1}^{s-1} \sum_{i=1}^{ls^n} [M_{(n+1,i)}P_0].
\end{equation*}
Swapping the second double sum in the above equation, we get
\begin{gather*}
\sum_{j=1}^{s^n-1} \sum_{l=0}^{s-1} c_j[M_{(n+1,j+ls^n)}P_0] + ks[M_{(n+1,0)}] + k\sum_{i=1}^{s^n-1} (s-1)[M_{(n+1,i)}P_0]\\
\qquad{} +k\sum_{i=s^n}^{(s-1)s^n} \left(s-\left\lceil\dfrac{i}{s^n}\right\rceil\right) [M_{(n+1,i)}P_0].
\end{gather*}
Reindexing, we write this formula as
\begin{equation*}
\sum_{i=0}^{s^{n+1}-1} c_i'[M_{(n+1,i)}P_0] + k' [M_{(n+1,0)}],
\end{equation*}
where $k'=ks$ and
\begin{equation}\label{coeffs}
c_i' = \begin{cases}
c_{(i\bmod s^n)} + k(s-1), & \text{if } 1\le i < s^n, \\
c_{(i \bmod s^n)} + k\left(s-\left\lceil \dfrac{i}{s^n} \right\rceil\right), & \text{if }s^n \leq i \leq (s-1)s^n ,\\ c_{(i\bmod s^n)}, & \text{if } (s-1)s^n < i < s^{n+1}.
\end{cases}
\end{equation}
Thus the map $\phi_n \colon K_0(A_{U,n}) \to K_0(A_{U,n+1})$, using the identification $K_0(A_{U,n})\cong \Z^{s^n}$ from equation \eqref{star}, is given by
\begin{equation*}
\phi_n((c_1,\dots,c_{s^n-1},k)) = (c_1',\dots,c_{s^{n+1}-1}',k').
\end{equation*}
As a consequence, we get the following proposition.

\begin{Proposition}
The $K$-theory of $A_U$ is given by
$K_0(A_U) = \lim\limits_{\longrightarrow}\Z^{s^n}$
with maps $\phi_n\colon\Z^{s^n} \to \Z^{s^{n+1}}$ given by
\begin{equation*}
\phi_n((c_1,\dots,c_{s^n-1},k)) = (c_1',\dots,c_{s^{n+1}-1}',ks),
\end{equation*}
where $c_i'$ are described in equation~\eqref{coeffs}.
\end{Proposition}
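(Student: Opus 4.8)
The plan is to deduce the result directly from the continuity of $K$-theory under inductive limits, since the hard computational work has already been done above. Recall from the subsection on the algebra $A_U$ that $A_U = \lim\limits_{\longrightarrow} A_{U,n}$ is the $C^*$-algebraic direct limit taken along the inclusions $A_{U,n}\subseteq A_{U,n+1}$. Because the $K$-functor commutes with sequential inductive limits of $C^*$-algebras (see, e.g., \cite[Theorem~6.3.2]{RLL}), I would immediately write
\begin{equation*}
K_0(A_U) \cong \lim\limits_{\longrightarrow} K_0(A_{U,n}), \qquad K_1(A_U) \cong \lim\limits_{\longrightarrow} K_1(A_{U,n}),
\end{equation*}
where the connecting homomorphisms in each inductive system are precisely the maps induced by functoriality from the inclusions $A_{U,n}\hookrightarrow A_{U,n+1}$.

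Next I would substitute the computations carried out earlier in this section. We established $K_0(A_{U,n})\cong\Z^{s^n}$ by splitting the relevant six-term sequence, with the explicit generators of equation~\eqref{star}, and we showed $K_1(A_{U,n})\cong 0$. The latter gives $K_1(A_U)\cong\lim\limits_{\longrightarrow} 0 = 0$, in agreement with the earlier proposition. For $K_0$, the connecting maps of the system are exactly the homomorphisms $\phi_n\colon K_0(A_{U,n})\to K_0(A_{U,n+1})$ induced by inclusion, which under the identification $K_0(A_{U,n})\cong\Z^{s^n}$ were computed above to send $(c_1,\dots,c_{s^n-1},k)$ to $(c_1',\dots,c_{s^{n+1}-1}',ks)$ with the $c_i'$ recorded in equation~\eqref{coeffs}. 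Assembling these facts yields the stated description $K_0(A_U)=\lim\limits_{\longrightarrow}\Z^{s^n}$.

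Because the lengthy computation of $\phi_n$ through equations~\eqref{starstar}--\eqref{coeffs} precedes the statement, the proof itself is genuinely short; the one point deserving care is verifying that the functorially induced maps on $K_0$ coincide with the $\phi_n$ as computed. This rests on two ingredients: that the ball decomposition $M_{(n,x)}=\sum_{l=0}^{s-1}M_{(n+1,x+ls^n)}$ correctly re-expresses the chosen generators of $K_0(A_{U,n})$ inside $A_{U,n+1}$, and that the Murray--von~Neumann equivalences rewriting $[M_{(n+1,ls^n)}]$ in terms of $[M_{(n+1,0)}]$ and the correction classes $[M_{(n+1,i)}P_0]$ are valid in $K_0(A_{U,n+1})$. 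I expect this bookkeeping to be the main (and only) obstacle. As a consistency check, one may observe that the direct limit must recover the non-split extension $0\to C(\Z_s,\Z)/\Z\to K_0(A_U)\to G_{s^\infty}\to 0$ of the earlier proposition, with the multiplication-by-$s$ action on the $k$-coordinate reproducing the $s$-adic rationals $G_{s^\infty}$ as the quotient.
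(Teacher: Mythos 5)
Your proposal is correct and follows essentially the same route as the paper: the proposition is presented there as a direct consequence of the continuity of $K$-theory under the inductive limit $A_U=\lim\limits_{\longrightarrow}A_{U,n}$, combined with the preceding identification $K_0(A_{U,n})\cong\Z^{s^n}$ (and $K_1(A_{U,n})\cong 0$) and the explicit computation of the inclusion-induced maps $\phi_n$ culminating in equation~\eqref{coeffs}. Your remarks on verifying that the functorial maps coincide with the computed $\phi_n$ via the ball decomposition and the Murray--von~Neumann equivalences match exactly the bookkeeping the paper carries out before stating the result.
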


\section{The Hensel shift}
\subsection[The algebra A\_V]{The algebra $\boldsymbol{A_V}$}
In this section, we consider the algebra
$
A_V=C^*(V,M_f\mid f\in C(\Z_s))$,
where the Hensel shift $V$ is given on the basis elements of $H=\ell^2(\mathcal{V})$ by $VE_{(n,x)} = E_{(n+1, sx)}$.
The corresponding maps on $A_V$ are
$
\alpha_V(x):=V x V^* \in A_V$ and $ \beta_V(x):=V^* x V \in A_V$,
for $x\in A_V$.

Consider also the maps: $\mathfrak{a}_V,\mathfrak{b}_V\colon C(\Z_s)\to C(\Z_s)$ given by
\begin{equation*}
\mathfrak{a}_Vf(x) =
\begin{cases}
f\left(\dfrac{x}{s}\right) &\textrm{if }s\mid x,\\
0 &\textrm{else}
\end{cases}\qquad\text{and}\qquad \mathfrak{b}_Vf(x)=f(sx) .
\end{equation*}
Both $\mathfrak{a}_V$ and $\mathfrak{b}_V$ are endomorphisms of $C(\Z_s)$. We have that
$\mathfrak{b}_V\circ\mathfrak{a}_V=1$, $ \mathfrak{b}_V(1)=1$ and $(\mathfrak{a}_V\circ \mathfrak{b}_V)f=\mathfrak{a}_V(1)f$.
Here
\begin{equation*}
\mathfrak{a}_V (1)(x) =
\begin{cases}
1 &\textrm{if }s\mid x, \\
0 &\textrm{else}
\end{cases}
\end{equation*}
is the characteristic function of the subset of $\Z_s$ of elements divisible by $s$.

Given $f\in C(\Z_s)$, $V$ and $M_f$ satisfy the following relations
$
M_fV=VM_{\mathfrak{b}_Vf}$ and $ \alpha_V(M_f)=M_{\mathfrak{a}_Vf}(I-P_{(0,0)}) $,
where $P_{(0,0)}$ is the projection onto (the subspace generated by) the basis element~$E_{(0,0)}$ in $H$.
We also have
$
VM_f=M_{\mathfrak{a}_Vf}V $ and $ \beta_V(M_f) = M_{\mathfrak{b}_V f}$.

\subsection[The coefficient algebra of A\_V]{The coefficient algebra of $\boldsymbol{A_V}$}
%\subsubsection{}
If we define
$
P_{(n,0)} = V^nP_{(0,0)}(V^*)^{n}$ for $n\in\Z_{\ge0}$,
we get $P_{(n,0)}\in A_V$ for all $n\in\Z_{\ge0}$. Additionally, $P_{(n,0)}$ is the one-dimensional projection onto (the subspace generated by) the basis element $E_{(n,0)}$.

By \cite[Proposition 5.2]{HKMP3}, the $C^*$-algebra $B_V$ generated by $M_f$'s and $P_{(n,0)}$
\begin{equation*}
B_V:=C^*(M_f,P_{(n,0)}\mid f\in C(\Z_s),\,n\in\Z_{\geq 0})
\end{equation*}
is the smallest coefficient algebra of $A_V$ containing $C(\Z_s)$.
 It easily follows from the above definitions that we have the following relation:
 \begin{equation}\label{Vproductrelation}
M_fP_{(n,0)}=f(0)P_{(n,0)} .
\end{equation}
Consequently, $B_V$ is commutative.

%\subsubsection{}
Consider the algebra $I_V$ generated by the projections $P_{(n,0)}$,
$I_V:= C^*(P_{(n,0)}, n\in\Z_{\geq 0})$.
By equation \eqref{Vproductrelation}, it is an ideal in $B_V$ and we have
$I_V\cong c_0(\Z_{\ge 0})$.
Since $\alpha_V(P_{(n,0)})=P_{(n+1,0)}$ and~${\beta_V(P_{(n,0)})=P_{(n-1,0)}}$, with $P_{(-1,0)}=0$, we see that the ideal $I_V$ is $\alpha_V$ and $\beta_V$ invariant.

%\subsubsection{}
 By \cite[Proposition 5.3]{HKMP3}, every element of $B_V$ can be uniquely written as
\begin{equation*}
 T_V(F):= \sum_{n=0}^{\infty}(x_n-f(0))P_{(n,0)}+M_{f} \in B_V,
\end{equation*}
where $F=(f, (x_n)_{n=0}^\infty)$ with $f\in C(\Z_s)$, and $(x_n)_{n=0}^\infty$ is a sequence such that $\lim_{n\to\infty}x_n=f(0)$. It follows that the factor algebra $B_V/I_V$ is isomorphic to $C(\Z_s)$
$
B_V/I_V\cong C(\Z_s)$.

By \cite[equation (5.2)]{HKMP3}, we have the following relations:
\begin{equation*}
V^*T_V(F)V=T_V\bigl(\tilde{\beta}_VF\bigr)\qquad\text{and}\qquad VT_V(F)V^*=T_V(\tilde{\alpha}_VF) ,
\end{equation*}
where the morphisms $\tilde{\alpha}_V$, $\tilde{\beta}_V$ are given by
\begin{equation*}
\tilde{\alpha}_V(f,(x_n)_{n=0}^\infty)= (\mathfrak{a}_V f,(x_{n-1})_{n=0}^\infty)\qquad\text{and}\qquad \tilde{\beta}_V(f,(x_n)_{n=0}^\infty) = (\mathfrak{b}_V f,(x_{n+1})_{n=0}^\infty),
\end{equation*}
taking $x_{-1}=0$.
It follows that the factor endomorphism $[\alpha_V]$ on $B_V/I_V\cong C(\Z_s)$ is identified with the endomorphism $\mathfrak{a}_V$ on $C(\Z_s)$.

\subsection[The structure of A\_V]{The structure of $\boldsymbol{A_V}$}

In \cite{HKMP2}, the authors defined the Hensel--Steinitz algebra ${\rm HS}(s)$ as the following crossed product by an endomorphism
${\rm HS}(s) = C(\Z_s)\rtimes_{\mathfrak{a}_V}\N$.
Here, since $\mathfrak{a}_V$ is a monomorphism with hereditary range, we can again use Stacey's definition of the crossed product in \cite{St} as the universal $C^*$-algebra generated by a copy of $C(\Z_s)$ and an isometry $v$ satisfying
$vfv^*=\mathfrak{a}_V(f)$. The goal is to relate $A_V$ with the above Hensel--Steinitz algebra.

%\subsubsection{}
Let $\mathcal{I}_V := C^*(aV^n \mid a\in I_V,\, n=0,1,\dots)$. Then from Section~\ref{sec3}, we know that $\mathcal{I}_V$ is an $\alpha_V$ and $\beta_V$ invariant ideal in $A_V$.
We have the following observation.

\begin{Proposition}\label{IVStructure}
The $C^*$-algebra $\mathcal{I}_V$ is isomorphic to the $C^*$-algebra of compact operators on a separable Hilbert space
$
\mathcal{I}_V\cong\mathcal{K}$.
\end{Proposition}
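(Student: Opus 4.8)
The plan is to exhibit $\mathcal{I}_V$ concretely as the algebra of compact operators on the subspace of $H$ spanned by the ``central'' basis vectors $E_{(n,0)}$, by showing that its natural generators are precisely the matrix units on that subspace.

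First I would set $\mathcal{H}_0 := \overline{\operatorname{span}}\{E_{(n,0)} \mid n\ge0\}$, which is isomorphic to $\ell^2(\Z_{\ge0})$. Since $VE_{(n,0)}=E_{(n+1,0)}$ and $V^*E_{(n,0)}=E_{(n-1,0)}$ (with $V^*E_{(0,0)}=0$), the shift $V$ restricts to $\mathcal{H}_0$ as the unilateral shift, while $P_{(n,0)}$ is the rank-one projection onto $E_{(n,0)}$. Because $I_V\cong c_0(\Z_{\ge0})$ is the closed linear span of the $P_{(n,0)}$, the ideal $\mathcal{I}_V$ is generated as a $C^*$-algebra by the products $P_{(n,0)}V^m$ with $n,m\ge0$.

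The key computation is to evaluate these generators on basis vectors. Using $V^mE_{(k,y)}=E_{(k+m,s^m y)}$ together with the fact that $P_{(n,0)}$ annihilates every basis vector except $E_{(n,0)}$, one obtains
\begin{equation*}
P_{(n,0)}V^m E_{(k,y)} = \delta_{k,n-m}\,\delta_{y,0}\, E_{(n,0)}.
\end{equation*}
Hence $P_{(n,0)}V^m$ is the rank-one partial isometry sending $E_{(n-m,0)}$ to $E_{(n,0)}$ when $n\ge m$, and is zero when $n<m$; in matrix-unit notation on $\mathcal{H}_0$ this is precisely $e_{n,n-m}$. Taking adjoints gives $(P_{(n,0)}V^m)^* = (V^*)^m P_{(n,0)} = e_{n-m,n}$, so the generators and their adjoints together supply all matrix units $e_{i,j}$ on $\mathcal{H}_0$.

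Finally, since products and adjoints of matrix units are again matrix units (or zero), the $*$-algebra they generate consists of the finite-rank operators on $\mathcal{H}_0$, whose norm closure is $\mathcal{K}(\mathcal{H}_0)\cong\mathcal{K}$; this identifies $\mathcal{I}_V\cong\mathcal{K}$. This argument is almost entirely a matter of the right bookkeeping, so there is no deep obstacle; the one point to watch is the factor $\delta_{y,0}$, which guarantees that every generator annihilates $\mathcal{H}_0^\perp$ and therefore that $\mathcal{I}_V$ lands inside $\mathcal{K}(\mathcal{H}_0)$ rather than some larger subalgebra of $B(H)$.
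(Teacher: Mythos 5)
Your proposal is correct and follows essentially the same route as the paper: both proofs exhibit the natural generators of $\mathcal{I}_V$ as a system of matrix units and conclude $\mathcal{I}_V\cong\mathcal{K}$. The only cosmetic difference is that the paper verifies the matrix-unit relations algebraically for $\tilde P_{(k,l)}=V^kP_{(0,0)}(V^*)^l$, whereas you compute the generators $P_{(n,0)}V^m$ directly on basis vectors and identify the underlying subspace $\mathcal{H}_0=\overline{\operatorname{span}}\{E_{(n,0)}\}$ explicitly.
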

\begin{proof}
If we define
$\tilde P_{(k,l)} = V^kP_{(0,0)}(V^*)^{l}$ for $k,l\in\Z_{\ge0}$,
we get \smash{$\tilde P_{(k,l)}\in A_V$} for all ${k,l\in\Z_{\ge0}}$. Clearly, we have
\smash{$\mathcal{I}_V=C^*\bigl(\tilde P_{(k,l)}\mid k,l\in\Z_{\ge0}\bigr)$}.
A straightforward calculation yields that
$
\smash{\tilde P_{(k,l)}^*}=\smash{\tilde P_{(l,k)}}$ and \smash{$\tilde P_{(k,l)}\tilde P_{(m,n)}=\delta_{lm}\tilde P_{(k,n)}$}.
Thus \smash{$\{\tilde P_{(k,l)}\}_{k,l\in\Z_{\ge0}}$} form a system of matrix units and therefore $\mathcal{I}_V$ is isomorphic to the $C^*$-algebra of compact operators on a separable Hilbert space.
\end{proof}

%\subsubsection{}
Consider the quotient $A_V/\mathcal{I}_V$, which by Proposition \ref{FactorCross} is isomorphic to the crossed product of $B_V/I_V$ by the factor monomorphism $[\alpha_V]$ with hereditary range. Since we identified $B_V/I_V$ with $C(\Z_s)$ and the endomorphism $[\alpha_V]$ with $\mathfrak{a}_V$, we get the following immediate consequence of the definition of the Hensel--Steinitz algebra.

\begin{Proposition}\label{VAlgebraQuotient}
The Hensel--Steinitz algebra ${\rm HS}(s)$ is isomorphic to $A_V/\mathcal{I}_V$.
\end{Proposition}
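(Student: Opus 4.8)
The plan is to obtain the result as a direct specialization of the general quotient isomorphism of Proposition~\ref{FactorCross}, followed by the concrete identifications already recorded for the Hensel shift. First I would apply Proposition~\ref{FactorCross} with $\fancyJ = V$. The hypotheses are in place: $I_V$ was shown to be an $\alpha_V$ and $\beta_V$ invariant ideal in $B_V$, so the proposition yields the isomorphism
\begin{equation*}
B_V/I_V \rtimes_{[\alpha_V]} \N \cong A_V/\mathcal{I}_V.
\end{equation*}

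Next I would translate the left-hand side using the two facts established in the previous subsection. The quotient map sending $T_V(F)$ to $f$ gives $B_V/I_V \cong C(\Z_s)$, and under this isomorphism the factor endomorphism $[\alpha_V]$ is carried to the endomorphism $\mathfrak{a}_V$ of $C(\Z_s)$. Substituting these into the crossed product on the left turns it into $C(\Z_s) \rtimes_{\mathfrak{a}_V} \N$. Recalling the definition ${\rm HS}(s) = C(\Z_s) \rtimes_{\mathfrak{a}_V} \N$, we then read off ${\rm HS}(s) \cong A_V/\mathcal{I}_V$, which is the claim.

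The single point deserving care is one of bookkeeping rather than mathematical depth: I would check that the crossed product used to define ${\rm HS}(s)$ agrees with the one produced by Proposition~\ref{FactorCross}. Because $\mathfrak{a}_V$ is a monomorphism with hereditary range --- equivalently, $[\alpha_V]$ was shown to be such --- the competing notions of crossed product by an endomorphism coincide, and in particular Stacey's multiplicity-one crossed product employed throughout the paper matches the construction appearing in the definition of ${\rm HS}(s)$. Granting this, the argument is simply a chain of identifications with no genuine obstacle.
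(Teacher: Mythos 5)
Your proposal is correct and follows essentially the same route as the paper: the paper likewise obtains the result as an immediate consequence of Proposition~\ref{FactorCross} combined with the identifications $B_V/I_V \cong C(\Z_s)$ and $[\alpha_V] = \mathfrak{a}_V$, and it also notes (just before the proposition) that Stacey's crossed product applies because $\mathfrak{a}_V$ is a monomorphism with hereditary range. Your extra bookkeeping remark matches the paper's own caveat, so there is nothing to add.
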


The main result of this section in the theorem below is a direct consequence of equation \eqref{SESequence}, Propositions~\ref{IVStructure} and~\ref{VAlgebraQuotient}.
\begin{Theorem}
We have the short exact sequence of $C^*$-algebras
\begin{equation}\label{AVExact}
0 \to\mathcal{K} \to A_V \to {\rm HS}(s) \to 0 .
\end{equation}
\end{Theorem}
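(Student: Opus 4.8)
The plan is simply to instantiate the general short exact sequence \eqref{SESequence} at $\mathcal{J}=V$ and then relabel its two outer terms using the identifications already established earlier in this section. Setting $\mathcal{J}=V$, $B_\fancyJ=B_V$, and $I_\fancyJ=I_V$ in \eqref{SESequence} produces at once
\begin{equation*}
0\to \mathcal{I}_V\to A_V\to B_V/I_V \rtimes_{[\alpha_V]} \N\to 0 ,
\end{equation*}
so the entire task reduces to recognizing the kernel and the quotient.

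First I would rewrite the ideal term: by Proposition~\ref{IVStructure} we have $\mathcal{I}_V\cong\mathcal{K}$, which replaces the left-hand term by the compacts. For the quotient term, recall that $B_V/I_V\cong C(\Z_s)$ and that under this isomorphism the factor endomorphism $[\alpha_V]$ is identified with $\mathfrak{a}_V$; consequently
\begin{equation*}
B_V/I_V \rtimes_{[\alpha_V]} \N \cong C(\Z_s)\rtimes_{\mathfrak{a}_V}\N = {\rm HS}(s) ,
\end{equation*}
the final equality being the definition of the Hensel--Steinitz algebra. This is precisely the content of Proposition~\ref{VAlgebraQuotient}, which (via Proposition~\ref{FactorCross}) identifies $A_V/\mathcal{I}_V$ with ${\rm HS}(s)$. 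Composing these two isomorphisms with the maps in the specialized sequence yields exactly \eqref{AVExact}; no further compatibility check is needed, since replacing the outer terms of a short exact sequence by isomorphic algebras again gives a short exact sequence.

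I do not expect any obstacle in this final assembly: all of the substantive work has been front-loaded into the cited propositions. The genuine difficulty lives earlier, in Proposition~\ref{IVStructure}, where one must verify that the operators $\tilde P_{(k,l)}=V^kP_{(0,0)}(V^*)^{l}$ form a complete system of matrix units — the relations $\tilde P_{(k,l)}^*=\tilde P_{(l,k)}$ and $\tilde P_{(k,l)}\tilde P_{(m,n)}=\delta_{lm}\tilde P_{(k,n)}$ are what force $\mathcal{I}_V$ to be all of $\mathcal{K}$ rather than a proper subalgebra — and in identifying the factor crossed product in Proposition~\ref{VAlgebraQuotient}. Granting those, the theorem is immediate.
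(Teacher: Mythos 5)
Your argument is correct and is exactly the paper's proof: the authors also obtain the theorem as a direct consequence of the general short exact sequence \eqref{SESequence} specialized to $\mathcal{J}=V$, combined with Proposition~\ref{IVStructure} for the kernel and Proposition~\ref{VAlgebraQuotient} for the quotient. Nothing further is needed.
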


\subsection[K-theory of A\_V]{$\boldsymbol{K}$-theory of $\boldsymbol{A_V}$}
The $K$-theory of $A_V$ can be computed from the $K$-theory of ${\rm HS}(s)$, along with the short exact sequence involving $A_V$ and ${\rm HS}(s)$. The $K$-theory of ${\rm HS}(s)$ was computed in \cite{HKMP2} and is given by~${
K_0({\rm HS}(s)) \cong C(\Z^\times_s, \Z)}$ and $ K_1({\rm HS}(s)) \cong 0$,
where $C(\Z_s^\times, \Z)$ denotes the continuous functions on $\Z_s^\times$ with values in $\Z$ where $\Z_s^\times$ is the unit $s$-adic sphere, the set of $x\in \Z_s$ so that~${|x|_s = 1}$. This leads to the following result.
\begin{Proposition}
The $K$-theory of $A_V$ is given by
$
K_0(A_V) \cong C(\Z^\times_s, \Z) \oplus \Z $ and $ K_1(A_V) \cong 0$.
\end{Proposition}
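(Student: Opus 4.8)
The plan is to feed the short exact sequence \eqref{AVExact} into the six-term exact sequence in $K$-theory and read off both groups from the $K$-theory of the two outer algebras, which are already known. The inputs are $K_0(\mathcal{K}) \cong \Z$ and $K_1(\mathcal{K}) \cong 0$, together with the cited computation $K_0({\rm HS}(s)) \cong C(\Z^\times_s, \Z)$ and $K_1({\rm HS}(s)) \cong 0$. Substituting these, the six-term sequence becomes
\begin{equation*}
\begin{tikzcd}
\Z \arrow{r} & K_0(A_V) \arrow{r} & C(\Z^\times_s, \Z) \arrow{d}{\textup{exp}} \\
0 \arrow{u}{\textup{ind}} & K_1(A_V) \arrow{l} & 0 \arrow{l}.
\end{tikzcd}
\end{equation*}

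Next I would dispose of the connecting maps, which is exactly where this case is easier than the Bunce--Deddens case. Because $K_1(\mathcal{K}) \cong 0$ the exponential map vanishes, and because $K_1({\rm HS}(s)) \cong 0$ the index map has trivial source and hence also vanishes. Exactness along the bottom row then squeezes $K_1(A_V)$ between two zero groups, forcing $K_1(A_V) \cong 0$. Along the top row, vanishing of the index map makes $\Z \cong K_0(\mathcal{K}) \to K_0(A_V)$ injective (so in particular the class of the minimal projection $P_{(0,0)}$ is \emph{not} killed, unlike $[P_0]$ in the $A_U$ case), while vanishing of $\textup{exp}$ makes $K_0(A_V) \to C(\Z^\times_s, \Z)$ surjective. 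I thus extract the short exact sequence of abelian groups
\begin{equation*}
0 \to \Z \to K_0(A_V) \to C(\Z^\times_s, \Z) \to 0.
\end{equation*}

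The last step, and the only one carrying real content, is to split this sequence. In contrast with the $A_U$ computation, where the quotient $G_{s^\infty}$ is a non-free subgroup of $\Q$ and the analogous sequence fails to split, here the quotient $C(\Z^\times_s, \Z)$ is free abelian. I would justify freeness using that $\Z^\times_s$ is a second countable compact totally disconnected space: choosing a refining sequence of finite clopen partitions generating the topology exhibits $C(\Z^\times_s, \Z)$ as an increasing union of the subgroups of functions constant on the cells of each partition, and at each refinement step the indicator of an old cell is a primitive vector in the block of its subcells, so the old subgroup is a direct summand and one enlarges a free basis by the indicators of all but one of the new subcells. This produces an explicit free $\Z$-basis, hence $C(\Z^\times_s, \Z)$ is free. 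A short exact sequence of abelian groups with free quotient splits, giving
\begin{equation*}
K_0(A_V) \cong \Z \oplus C(\Z^\times_s, \Z) \cong C(\Z^\times_s, \Z) \oplus \Z,
\end{equation*}
as claimed. The main point to get right is this freeness/splitting argument; the rest is a direct reading of the six-term sequence.
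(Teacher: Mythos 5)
Your proposal is correct and follows essentially the same route as the paper: feed the sequence $0\to\mathcal{K}\to A_V\to {\rm HS}(s)\to 0$ into the six-term exact sequence, kill both connecting maps using $K_1(\mathcal{K})\cong 0$ and $K_1({\rm HS}(s))\cong 0$, extract $0\to\Z\to K_0(A_V)\to C(\Z_s^\times,\Z)\to 0$, and split it by freeness of the quotient. The only difference is cosmetic: the paper cites \cite[Exercise 7.7.5]{HR} for the freeness of $C(\Z_s^\times,\Z)$, whereas you supply the standard refining-clopen-partitions argument explicitly, which is a perfectly valid substitute.
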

\begin{proof}
The short exact sequence from equation \eqref{AVExact}
induces the following $6$-term sequence in $K$-theory
\begin{equation*}
\begin{tikzcd}
 K_0(\mathcal{I}_V) \arrow{r} & K_0(A_V) \arrow{r} & K_0({\rm HS}(s)) \arrow{d}{\textup{exp}} \\
 K_1({\rm HS}(s)) \arrow{u}{\textup{ind}} & K_1(A_V) \arrow{l} & K_1(\mathcal{I}_V) \arrow{l}.
\end{tikzcd}
\end{equation*}
Since $\mathcal{I}_V$ is isomorphic to the $C^*$-algebra of compact operators, $K_0(\mathcal{I}_V) \cong \Z$ and $K_1(\mathcal{I}_V) \cong 0$. Since $K_1({\rm HS}(s)) \cong 0$, it follows immediately from exactness at $K_1(A_V)$ that
$K_1(A_V) \cong 0$.
We can then extract a short exact sequence of Abelian groups from the top row
\begin{equation*}
0 \to \Z \to K_0(A_V) \to C(\Z_s^\times, \Z) \to 0.
\end{equation*}
\cite[Exercise 7.7.5]{HR} shows that $C(\Z_s^\times, \Z)$ is free, thus this sequence necessarily splits. Hence,
$
K_0(A_V) \cong C(\Z_s^\times , \Z) \oplus \Z$.
This completes the proof.
\end{proof}

\section{The Bernoulli shift}
\subsection[The algebra A\_S]{The algebra $\boldsymbol{A_S}$}
Recall that the Bernoulli shift $S\colon H\to H$ is defined as
\begin{equation*}
SE_{(n,x)}=\frac{1}{s}\sum_{j=0}^{s-1}E_{(n+1, sx+j)} .
\end{equation*}
In this section, we study the algebra
$
A_S=C^*(S,M_f\mid f\in C(\Z_s))$.
The corresponding maps on~$A_S$ are
$
\alpha_S(a):=S aS^* \in A_S
$ and $ \beta_S(a):=S^* a S \in A_S$,
for $a\in A_S$.

Like in the other cases we have maps $\mathfrak{a}_S,\mathfrak{b}_S\colon C(\Z_s)\to C(\Z_s)$ defined by
\begin{equation*}
\mathfrak{a}_Sf(x) = f\left(\frac{x-x\textrm{ mod }s}{s}\right)\qquad\text{and}\qquad \mathfrak{b}_Sf(x) = \frac{1}{s}\sum_{j=0}^{s-1}f(sx+j) .
\end{equation*}
It is clear that $\mathfrak{a}_S$ is an endomorphism but $\mathfrak{b}_S$ is not a homomorphism as it was before, however~$\mathfrak{b}_S$ is positive and linear. We also have that $\mathfrak{b}_S(I)=I=\mathfrak{a}_S(I)$. It is also straightforward to see that $\mathfrak{a}_S$ has trivial kernel.
We also have that $\mathfrak{b}_S\circ\mathfrak{a}_S=I$ and that $\Ran\mathfrak{b}_S=C(\Z_s)$.

Given $f\in C(\Z_s)$, $S$ and $M_f$ satisfy the following relations
$\beta_S(M_f)=M_{\mathfrak{b}_Sf} $ and $ SM_f=M_{\mathfrak{a}_Sf}S$
but, unlike for the previous to shifts, $M_fS$ is not equal to $SM_{\mathfrak{b}_Sf}$.

For $j=0,1,\dots, s-1$ let $\chi_j$ be the characteristic function for the ball in $\Z_s$ with center $j$ and radius $s^{-1}$
\begin{equation*}
\chi_j(x) =
\begin{cases}
1 &\textrm{if }x \textrm{ mod }s=j, \\
0 &\textrm{else.}
\end{cases}
\end{equation*}
Notice that for each $j$, $\chi_j$ is locally constant and hence is a continuous function on $\Z_s$, see \cite{Ro}. We define operators $S_j\colon H\to H$ for each $j=0,\dots, s-1$ by
\begin{equation}\label{S_j defn}
S_j=sM_{\chi_j}S.
\end{equation}
By \cite[Proposition 6.2]{HKMP3}, we have the equality of $C^*$-algebras
$
A_S=C^*(S_j\mid j=0,\dots,s-1)$.
Two key relations between the $S_j$'s, are the following:
\begin{equation}\label{A_S_cuntz_rel}
S_j^*S_k=\delta_{jk}I\quad \text{for all}\ j,k=0,\dots,s-1 \qquad \text{and}\qquad\sum_{j=0}^{s-1}S_jS_j^*=I-P_{(0,0)} ,
\end{equation}
where $P_{(0,0)}$ is the projection onto (the subspace generated by) the basis element $E_{(0,0)}$. This identifies $A_S$ with the Cuntz--Toeplitz algebra, see \cite{HKMP3}.

It is clear that $P_{(0,0)}$ is a diagonal compact operator, and in fact by \cite[Proposition 6.3]{HKMP3}, all compact operators on $H$ are in $A_S$.

\subsection[The coefficient algebra of A\_S]{The coefficient algebra of $\boldsymbol{A_S}$}
Since Cuntz and Cuntz--Toeplitz algebras are textbook material, we skip some of the arguments below. However, for completeness we decided to include all the details organized so that they fit the discussions of other shifts.

%\subsubsection{}
For $n\in \Z_{\ge0}$, $0 \le x < s^n$, expand $x$ as
$x = x_0 + x_1 s + x_2 s^2 + \cdots + x_{n-1} s^{n-1}$,
with the coefficients satisfying $0\leq x_k<s$.
Define the operator $S_{(n,x)}$ on $H$ by
\[
S_{(n,x)} := S_{x_0}S_{x_1}\cdots S_{x_{n-1}},
\]
where $S_{x_k}$ is given by equation \eqref{S_j defn} for $k=0,\dots,n-1$.
By \cite[Proposition 6.5]{HKMP3}, the following subalgebra of $A_S$
\begin{equation*}
B_S:= C^*(S_{(n,x)}S_{(n,y)}^*\mid n\in \Z_{\ge0},\, 0\le x,y<s^n).
\end{equation*}
 is the minimal coefficient algebra extension of $C(\Z_s)$.

%\subsubsection{}
It was noticed in \cite[Section 6.6]{HKMP3} that the algebra $\mathcal{K}(H)_{\mathrm{inv}}$ of gauge invariant compact operators is a subalgebra, in fact an ideal of $B_S$. Thus, we take
$I_S:=\mathcal{K}(H)_{\mathrm{inv}}$
and we see from the definitions that the ideal $I_S$ is $\alpha_S$ and $\beta_S$ invariant.

%\subsubsection{}
 Let $\mathcal{O}_s$ be the Cuntz algebra indexed by $s\ge2$. This algebra is the universal $C^*$-algebra with generators $\{u_j\}_{j=0}^{s-1}$ and relations
$u_j^*u_k=\delta_{jk}$ and $ \sum_{j=0}^{s-1}u_ju_j^*=1$.
As before, for $n\in \Z_{\ge0}$, $0 \le x < s^n$, expand $x$ as
\smash{$
x = x_0 + x_1 s + x_2 s^2 + \cdots + x_{n-1} s^{n-1}$},
and define the following elements of $\mathcal{O}_s$,
$
u_{(n,x)} = u_{x_0}u_{x_1}\cdots u_{x_{n-1}}$.

Consider the following subalgebra of $\mathcal{O}_s$:
\begin{equation*}
\mathcal{O}_{s,\mathrm{inv}}:= C^*(u_{(n,x)}u_{(n,y)}^*\mid n\in \Z_{\ge0},\, 0\le x,y<s^n).
\end{equation*}
It is known, see \cite{Cu3}, that $\mathcal{O}_{s,\mathrm{inv}}$ is the invariant subalgebra of $\mathcal{O}_s$ with respect to the gauge action on $\mathcal{O}_s$ and that
$\mathcal{O}_{s,\mathrm{inv}}\cong UHF(s^\infty)$,
the uniformly hyperfinite $C^*$-algebra corresponding to the supernatural number $s^\infty$.

\cite[Section 6.4]{HKMP3} gives a definition of the Toeplitz map below and describes very useful properties of it
$
T_S\colon \mathcal{O}_s\to A_S$.
Proposition 6.6 of that paper establishes that every $b\in B_S$ can be uniquely written as
$
b=T_S(a)+c$,
where $a\in \mathcal{O}_{s,\mathrm{inv}}$ and $c\in \mathcal{K}(H)_{\mathrm{inv}}$. Consequently,
we have~${
B_S/I_S\cong \mathcal{O}_{s,\mathrm{inv}} \cong UHF(s^\infty)}$.

\subsection[The structure of A\_S]{The structure of $\boldsymbol{A_S}$}
%\subsubsection{}
Let $\mathcal{I}_S := C^*(aS^n \mid a\in I_S,\, n=0,1,\dots)$. Then from Section~\ref{sec3} we know that $\mathcal{I}_S$ is an $\alpha_S$ and~$\beta_S$ invariant ideal in $A_S$. Since $\mathcal{K}(H)$ is the only proper nontrivial ideal in the Cuntz--Toeplitz algebra $A_S$, we have
$
\mathcal{I}_S\cong \mathcal{K}(H)$.

%\subsubsection{}
To summarize, our key relations in equation \eqref{A_S_cuntz_rel} imply that $A_S$ is the Cuntz--Toeplitz algebra. By the structure of the Cuntz--Toeplitz algebra, see \cite[Lemma V.5.2]{KD},
we get the following result~${
A_S/\mathcal{I}_S\cong A_S/\mathcal{K}(H)\cong\mathcal{O}_s}$.
This implies that we have the following short exact sequence
\begin{equation}\label{ses-As}
0\rightarrow\mathcal{K}(H)\rightarrow A_S\rightarrow\mathcal{O}_s\rightarrow 0.
\end{equation}

\subsection[K-theory of A\_S]{$\boldsymbol{K}$-theory of $\boldsymbol{A_S}$}
The short exact sequence in equation \eqref{ses-As} also induces a $6$-term exact sequence in $K$-theory. Since the $K$-theory and generators of $\mathcal{O}_s$ and $\mathcal{K}(H)$ are known, this can be used to compute the $K$-theory and generators of $A_S$. These results are summarized in the following proposition.
\begin{Proposition}
The $K$-theory of $A_S$ is given by
$
K_0(A_S) \cong \Z$ and $ K_1(A_S) \cong 0$.
\end{Proposition}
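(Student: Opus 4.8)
The plan is to feed the short exact sequence \eqref{ses-As}, namely $0\to\mathcal{K}(H)\to A_S\xrightarrow{\pi}\mathcal{O}_s\to 0$, into the six-term exact sequence in $K$-theory, exactly as in the previous sections. I would start from the classical inputs $K_0(\mathcal{K}(H))\cong\Z$ (generated by the class of any rank-one projection) and $K_1(\mathcal{K}(H))\cong 0$, together with the $K$-theory of the Cuntz algebra $K_0(\mathcal{O}_s)\cong\Z/(s-1)\Z$ (generated by $[1]_0$) and $K_1(\mathcal{O}_s)\cong 0$; see \cite{RLL,KD}. In the six-term sequence the index map originates at $K_1(\mathcal{O}_s)=0$ and the exponential map lands in $K_1(\mathcal{K}(H))=0$, so both vanish for trivial reasons. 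Exactness at $K_1(A_S)$, which is squeezed between $K_1(\mathcal{K}(H))=0$ and $K_1(\mathcal{O}_s)=0$, immediately yields $K_1(A_S)\cong 0$, while the remaining part of the sequence collapses to the short exact sequence
\begin{equation*}
0\to\Z\xrightarrow{\iota_*}K_0(A_S)\xrightarrow{\pi_*}\Z/(s-1)\Z\to 0,
\end{equation*}
with $\iota_*$ injective (the preceding index map is zero) and $\pi_*$ surjective (the following exponential map is zero).

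The real work is to pin down this extension, which is not a direct sum. The key computation uses the Cuntz--Toeplitz relations \eqref{A_S_cuntz_rel}. Since each $S_j$ is an isometry, $S_jS_j^*$ is Murray--von Neumann equivalent to $S_j^*S_j=I$, so $[S_jS_j^*]_0=[I]_0$ in $K_0(A_S)$; combined with $\sum_{j=0}^{s-1}S_jS_j^*=I-P_{(0,0)}$ this gives
\begin{equation*}
[P_{(0,0)}]_0=[I]_0-\sum_{j=0}^{s-1}[S_jS_j^*]_0=(1-s)[I]_0.
\end{equation*}
Since $P_{(0,0)}$ is a rank-one projection, its class generates $K_0(\mathcal{K}(H))\cong\Z$, so $\iota_*$ carries this generator to $-(s-1)[I]_0$ and $\operatorname{im}\iota_*=(s-1)\Z\cdot[I]_0$.

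With this in hand I would conclude as follows. For any $\xi\in K_0(A_S)$ the image $\pi_*(\xi)$ is a multiple of $\pi_*[I]_0$ (which generates the quotient), so $\xi-m[I]_0\in\ker\pi_*=\operatorname{im}\iota_*\subseteq\Z\cdot[I]_0$ for some $m$, forcing $\xi\in\Z\cdot[I]_0$. Thus $K_0(A_S)$ is cyclic, generated by $[I]_0$; and it contains $[P_{(0,0)}]_0=\iota_*(\text{generator})$, which has infinite order because $\iota_*$ is injective and $K_0(\mathcal{K}(H))\cong\Z$. A cyclic group with an element of infinite order is $\Z$, so $K_0(A_S)\cong\Z$, generated by $[I]_0$. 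The main obstacle is precisely this identification of the extension: the six-term sequence alone leaves open the split possibility $\Z\oplus\Z/(s-1)\Z$, and it is the explicit relation $[P_{(0,0)}]_0=(1-s)[I]_0$ coming from the Cuntz structure that collapses the answer to $\Z$.
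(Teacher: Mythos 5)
Your proposal is correct and follows essentially the same route as the paper: the six-term sequence for $0\to\mathcal{K}(H)\to A_S\to\mathcal{O}_s\to 0$ gives $K_1(A_S)\cong 0$ and the extension $0\to\Z\to K_0(A_S)\to\Z/(s-1)\Z\to 0$, which is then resolved by showing $[P_{(0,0)}]_0=-(s-1)[I]_0$ via Murray--von Neumann equivalence of the $S_jS_j^*$ with $I$, so that $K_0(A_S)$ is cyclic on $[I]_0$ and injectivity of $K_0(\mathcal{K}(H))\to K_0(A_S)$ forces it to be $\Z$. The only cosmetic difference is that the paper packages the equivalence $\bigl[\sum_j S_jS_j^*\bigr]_0=s[I]_0$ using the single element $\operatorname{diag}(S_0,\dots,S_{s-1})$ of $M_s(A_S)$, whereas you apply it one summand at a time.
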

\begin{proof}
The short exact sequence
\begin{equation*}
0\rightarrow\mathcal{K}(H)\rightarrow A_S\rightarrow\mathcal{O}_s\rightarrow 0
\end{equation*}
induces the following $6$-term exact sequence in $K$-theory:
\begin{equation*}
\begin{tikzcd}
 K_0(\mathcal{K}(H)) \arrow{r} & K_0(A_S) \arrow{r} & K_0(\mathcal{O}_s) \arrow{d}{\textup{exp}} \\
 K_1(\mathcal{O}_s) \arrow{u}{\textup{ind}} & K_1(A_S) \arrow{l} & K_1(\mathcal{K}(H)) \arrow{l}.
\end{tikzcd}
\end{equation*}
Since $K_1(\mathcal{O}_s) \cong 0$ (see \cite{Cu2}), and $K_1(\mathcal{K}(H)) \cong 0$, it follows immediately that
$K_1(A_S) \cong 0$.
From the $6$-term exact sequence, we can extract a short exact sequence containing $K_0(A_S)$,
\begin{equation*}
0 \to K_0(\mathcal{K}(H)) \to K_0(A_S) \to K_0(\mathcal{O}_s) \to 0.
\end{equation*}
Recall that $K_0(\mathcal{K}(H)) \cong \Z$, and is generated by the class of any rank $1$ projection, such as
\[
[P_{(0,0)}]_0 = \left[I - \sum_{i=0}^{s-1} S_iS_i^*\right]_0.
\]
On the other hand, we have
$K_0(\mathcal{O}_s) \cong \Z / (s-1) \Z$,
and it is generated by the class of the identity~$[I]_0$, see \cite{Cu2}. $K_0(A_S)$ is then necessarily generated by $[I]_0$ and $\big[I - \sum_{i=0}^{s-1} S_iS_i^*\big]_0$ (as classes of projections in $A_S$). However, it is easy to see that the class \smash{$\big[\sum_{i=0}^{s-1} S_iS_i^*\big]_0$} coincides with~$s [I]_0$, via Murray--von~Neumann equivalence using the element $\textup{diag}(S_0, \dots , S_{s-1})$ of $M_s(A_S)$, and the fact that $S_iS_i^*$ and $S_jS_j^*$ are mutually orthogonal for $i \neq j$. From this, we have that
\begin{equation*}
\left[I - \sum_{i=1}^{s-1} S_iS_i^*\right]_0 = -(s-1)[I]_0.
\end{equation*}
Hence $K_0(A_S)$ is generated by $[I]_0$. If $[I]_0$ had finite order, then the map
$K_0(\mathcal{K}(H)) \to K_0(A_S)$
would not be injective, which contradicts exactness. Hence $K_0(A_S) \cong \Z$ and is generated by~$[I]_0$. This completes the proof.
\end{proof}

\section{The Serre shift}
\subsection[The algebra A\_W]{The algebra $\boldsymbol{A_W}$}

In this final section, the goal is to study the algebra
$A_W=C^*(W,M_f\mid f\in C(\Z_s))$.
 Recall that the Serre shift is given by
\begin{equation*}
WE_{(n,x)} = \frac{1}{\sqrt{s}}\sum_{j=0}^{s-1}E_{(n+1, x+js^n)} .
\end{equation*}

Consider the algebra $\mathcal{F}$ of functions on $\mathcal{V}$ defined as
\begin{gather*}
\mathcal{F}:=\{F\colon \mathcal{V}\to\C \mid \textrm{there is}\,f_F\in C(\Z_s), \, \textrm{so that}\,\lim_n\underset{0\le x<s^n}{\textrm{sup }}|F(n,x\textrm{ mod }s^n)-f_F(x)|=0\} .
\end{gather*}
If $F\in\mathcal{F}$, we define
 the operator $M_F$ acting on $\ell^2(\mathcal{V})$ by
$M_FE_{(n,x)}=F(n,x)E_{(n,x)}$.
By \cite[Proposition~7.2]{HKMP3}, for every $F\in\mathcal{F}$ we have $M_F\in A_W$. Additionally, by \cite[Proposition~7.3]{HKMP3}, the $C^*$-algebra $C^*(W,M_F)$ generated by $W$ and $M_F$ with $F\in\mathcal{F}$ is equal to $A_W$.

 If $F\in\mathcal{F}$, we define
\begin{gather*}
\mathfrak{a}_WF(n,x) =
\begin{cases}
F\bigl(n-1, x\textrm{ mod }s^{n-1}\bigr) &\textrm{if }n\ge1,\\
0 &\textrm{else},
\end{cases}\\
\mathfrak{b}_WF(n,x) = \frac{1}{s}\sum_{j=0}^{s-1}F(n+1, x+js^n) .
\end{gather*}
With those definitions, we get the following useful relations
$
\beta_W(M_F)=M_{\mathfrak{b}_WF}$ and $ WM_F=M_{\mathfrak{a}_WF}W$.

\subsection[The coefficient algebra of A\_W]{The coefficient algebra of $\boldsymbol{A_W}$}
%\subsubsection{}
Consider the mutually orthogonal projections $\{\mathcal{P}_n\}_{n\in \Z_{\ge 0}}$ given by
$\mathcal{P}_0=I-WW^*$ and $ \mathcal{P}_n = W^n\mathcal{P}_0(W^*)^{n}$.
By \cite[Proposition 7.8]{HKMP3}, we know that the $C^*$-algebra $B_W$ generated by $M_F$'s and $\{\mathcal{P}_n\}$
\begin{equation*}
B_W:=C^*(M_F,\mathcal{P}_n\mid F\in\mathcal{F}, \, n\in\Z_{\geq 0})
\end{equation*}
is a coefficient algebra of $A_W$.

%\subsubsection{}
It was noticed in \cite[Propositions 7.4 and 7.8]{HKMP3} that the algebra $\mathcal{K}(H)_{\mathrm{inv}}$ of gauge invariant compact operators is a subalgebra, in fact an ideal of $B_W$. Thus, we take
$
I_W:=\mathcal{K}(H)_{\mathrm{inv}}
$
and we see from the definitions that the ideal $I_W$ is $\alpha_W$ and $\beta_W$ invariant.

%\subsubsection{}
The space $c(\Z_{\ge0}, C(\Z_s))$ of uniformly convergent sequences of continuous functions on $\Z_s$ appeared in the discussion of the algebra $A_U$ and it also is needed here. Consider the following Toeplitz-like map:
\begin{equation*}
c(\Z_{\ge0}, C(\Z_s))\ni G\mapsto T_W(G):= \sum_{n=0}^{\infty}\mathcal{P}_n(M_{g_n}-M_{g_\infty})\mathcal{P}_n+M_{g_\infty} \in B_W,
\end{equation*}
where $g_\infty$ is defined in equation \eqref{finf}.
Then, by \cite[Proposition 7.7]{HKMP3} every $a\in B_W$ can be uniquely written as
$a=T_W(G)+c$,
for some $c\in \mathcal{K}(H)_{\mathrm{inv}}$ and $G\in c(\Z_{\ge0}, C(\Z_s))$. It follows, see also \cite[Proposition 7.8]{HKMP3}, that the factor algebra $B_W/I_W$ is isomorphic to $c(\Z_{\ge0}, C(\Z_s))$,
$
B_W/I_W\cong c(\Z_{\ge0}, C(\Z_s))$.

By \cite[Proposition 7.8]{HKMP3}, we have the following relation
$
\alpha_W(T_W(G))=T_W(G')+c$,
where~$c$ is compact and $G'=(g_n')_{n\in \Z_{\ge0}}$ with $g_n'=g_{n-1}$ and
taking $g_{-1}=0$.
It follows that the factor endomorphism $[\alpha_W]$ on $B_W/I_W\cong c(\Z_{\ge0}, C(\Z_s))\cong c(\Z_{\ge0}) \otimes C(\Z_s)$ is identified with endomorphism $\alpha\otimes I$ on $c(\Z_{\ge0}) \otimes C(\Z_s)$ where $\alpha\colon c(\Z_{\ge0})\to c(\Z_{\ge0})$ is a monomorphism given by\looseness=-1
\begin{equation}\label{shiftdef}
\alpha((x_n)_{n=0}^\infty)= (x_{n-1})_{n=0}^\infty
\end{equation}
with $x_{-1}:=0$.

\subsection[The structure of A\_W]{The structure of $\boldsymbol{A_W}$}
The classical and motivating example of a crossed product by an endomorphism is the Toeplitz algebra \cite{Cob}
$
\mathcal{T}=c(\Z_{\ge0})\rtimes_{\alpha}\N
$
with $\alpha$ defined in equation \eqref{shiftdef}. The goal is to relate $A_W$ with the above Toeplitz algebra.

%\subsubsection{}
Let $\mathcal{I}_W := C^*(aW^n \mid a\in I_W,\, n=0,1,\dots)$. Then from Section \ref{sec3}, we know that $\mathcal{I}_W$ is an $\alpha_W$ and $\beta_W$ invariant ideal in $A_W$.
We have the following observation.
\begin{Proposition}\label{IWStructure}
The $C^*$-algebra $\mathcal{I}_W$ is equal to the $C^*$-algebra of compact operators on~$H$
\begin{equation*}
\mathcal{I}_W=\mathcal{K}(H).
\end{equation*}
\end{Proposition}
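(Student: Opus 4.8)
The plan is to prove the equality by two inclusions, putting almost all of the effort into the second one. The inclusion $\mathcal{I}_W\subseteq\mathcal{K}(H)$ is immediate: $I_W=\mathcal{K}(H)_{\mathrm{inv}}$ consists of compact operators, so every generator $aW^n$ (with $a\in I_W$) and its adjoint $(W^*)^na^*$ is compact because $\mathcal{K}(H)$ is an ideal in $B(H)$; since $\mathcal{K}(H)$ is norm closed, the $C^*$-algebra these elements generate is contained in $\mathcal{K}(H)$. The content of the proposition is therefore the reverse inclusion $\mathcal{K}(H)\subseteq\mathcal{I}_W$.

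First I would record what $I_W=\mathcal{K}(H)_{\mathrm{inv}}$ already supplies. A compact operator is gauge invariant exactly when it commutes with every $\mathcal{U}_\theta$, that is, when it is block diagonal for the decomposition $H=\bigoplus_n H_n$ with $H_n=\mathrm{span}\{E_{(n,x)}\mid 0\le x<s^n\}$, because $\mathcal{U}_\theta$ acts by the scalar ${\rm e}^{2\pi {\rm i}n\theta}$ on $H_n$ and these scalars separate the levels. Writing $\Theta_{(m,y),(n,x)}$ for the rank-one operator determined by $\Theta_{(m,y),(n,x)}E_{(k,z)}=\delta_{(n,x),(k,z)}E_{(m,y)}$, it follows that every \emph{same-level} matrix unit $\Theta_{(n,y),(n,x)}$ lies in $I_W\subseteq\mathcal{I}_W$. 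Hence the only matrix units still to be produced are the cross-level ones.

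The key step is to extract a single cross-level matrix unit out of $W$ by flanking it with rank-one projections taken from $I_W$. Using $WE_{(n,x)}=\frac{1}{\sqrt{s}}\sum_{j=0}^{s-1}E_{(n+1,x+js^n)}$, a short computation gives
\begin{equation*}
\Theta_{(n+1,y),(n+1,y)}\,W\,\Theta_{(n,x),(n,x)}=\frac{1}{\sqrt{s}}\,\Theta_{(n+1,y),(n,x)}
\end{equation*}
whenever $y\equiv x\pmod{s^n}$, and $0$ otherwise. Since the first factor $\Theta_{(n+1,y),(n+1,y)}W$ is a generator of $\mathcal{I}_W$ and the last factor lies in $I_W\subseteq\mathcal{I}_W$, the right-hand side lies in $\mathcal{I}_W$. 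Composing this one cross-level unit on both sides with same-level units from $I_W$ yields $\Theta_{(n+1,y'),(n,x')}$ for \emph{all} $x',y'$, and taking adjoints gives the downward units $\Theta_{(n,x),(n+1,y)}$. A routine induction on the level difference, chaining through intermediate levels, then places every $\Theta_{(m,y),(n,x)}$ in $\mathcal{I}_W$. As these matrix units span a dense set of finite-rank operators, the closedness of $\mathcal{I}_W$ gives $\mathcal{K}(H)\subseteq\mathcal{I}_W$, and combined with the first inclusion this proves the claim.

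The main obstacle is exactly the reverse inclusion, and within it the only delicate point is the sandwiching identity above: one must verify that flanking $W$ by the correct pair of rank-one projections collapses the $s$-term superposition in $WE_{(n,x)}$ to a single matrix unit, and then organize the bookkeeping so that the same-level units already present in $I_W$ suffice to reach every cross-level pair. The congruence $y\equiv x\pmod{s^n}$ is what makes this work, since for each source $x$ (with $0\le x<s^n$) it is automatically satisfied by the $s$ admissible targets $y=x+js^n$, guaranteeing that the chaining argument never stalls.
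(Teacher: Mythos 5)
Your proof is correct, but it takes a more hands-on route than the paper's. You both reduce the problem to showing that the cross-level matrix units lie in $\mathcal{I}_W$, and you both exploit the fact that $I_W=\mathcal{K}(H)_{\mathrm{inv}}$ already contains all same-level (block-diagonal) units. Where you differ is in how a cross-level unit is produced: you use the explicit formula for $W$ to verify the sandwich identity $\Theta_{(n+1,y),(n+1,y)}\,W\,\Theta_{(n,x),(n,x)}=\tfrac{1}{\sqrt{s}}\,\Theta_{(n+1,y),(n,x)}$ for $y\equiv x\ (\mathrm{mod}\ s^n)$, and then propagate to arbitrary pairs by composing with same-level units and inducting on the level difference. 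The paper instead uses a one-line formal trick: since $W$ is an isometry, $(W^*)^{k}W^{k}=I$, so any matrix unit $P_{(n,x),(m,y)}$ with $n>m$ factors as $\bigl(P_{(n,x),(m,y)}(W^*)^{n-m}\bigr)W^{n-m}$, and the first factor is compact and gauge invariant (it is homogeneous of degree $0$ under $\rho_\theta$ because $P_{(n,x),(m,y)}$ has degree $n-m$ and $(W^*)^{n-m}$ has degree $-(n-m)$), hence lies in $I_W$; thus each cross-level unit is literally one of the generators $aW^k$ of $\mathcal{I}_W$. The paper's argument avoids both the explicit computation with the $s$-term superposition and the chaining induction, and it would work verbatim for any isometry admitting such a gauge action; your argument buys a concrete picture of how $W$ connects adjacent levels, at the cost of being tied to the specific form of the Serre shift. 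Both are complete proofs.
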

\begin{proof} Clearly, $\mathcal{I}_W\subseteq \mathcal{K}(H)$. Consider the matrix units for the canonical base of $H$
\begin{equation*}
P_{(n,x),(m,y)} E_{(k,z)} = \begin{cases}
E_{(n,x)}, & (m,y)=(k,z) ,\\ 0, & \text{else}.
\end{cases}
\end{equation*}
They generate $\mathcal{K}(H)$ and by \cite[Proposition 7.4]{HKMP3}, $\mathcal{K}(H)\subseteq A_W$, so $P_{(n,x),(m,y)} \in A_W$.
Computing the gauge action on the matrix units gives
\begin{equation*}
\rho_\theta(P_{(n,x),(m,y)} )={\rm e}^{2\pi {\rm i}\theta(n-m)}P_{(n,x),(m,y)} .
\end{equation*}
If $n> m$, we can write
\begin{equation*}
P_{(n,x),(m,y)} =P_{(n,x),(m,y)} (W^*)^{n-m}W^{n-m}
\end{equation*}
and notice that $P_{(n,x),(m,y)} (W^*)^{n-m}$ is gauge invariant and compact. A similar argument works for $n< m$. It follows that $P_{(n,x),(m,y)}\in \mathcal{I}_W$.
\end{proof}

%\subsubsection{}
We now consider the factor algebra $A_W/\mathcal{I}_W$, which is isomorphic to the crossed product of~$B_W/I_W\cong c(\Z_{\ge0}) \otimes C(\Z_s)$ by the monomorphism $\alpha\otimes I$.

\begin{Proposition}\label{WAlgebraQuotient}
We have an isomorphism of $C^*$-algebras
$A_W/\mathcal{I}_W\cong \mathcal{T}\otimes C(\Z_s)$,
where $\mathcal{T}$ is the Toeplitz algebra.
\end{Proposition}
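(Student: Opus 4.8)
The plan is to combine Proposition~\ref{FactorCross} with a general tensor-factorization principle for crossed products by endomorphisms. By Proposition~\ref{FactorCross} applied to the invariant ideal $I_W$, we have $A_W/\mathcal{I}_W\cong (B_W/I_W)\rtimes_{[\alpha_W]}\N$. Using the identifications established above, namely $B_W/I_W\cong c(\Z_{\ge0})\otimes C(\Z_s)$ and $[\alpha_W]=\alpha\otimes I$ with $\alpha$ as in equation~\eqref{shiftdef}, the problem reduces to establishing the isomorphism
\begin{equation*}
(c(\Z_{\ge0})\otimes C(\Z_s))\rtimes_{\alpha\otimes I}\N\cong \mathcal{T}\otimes C(\Z_s),
\end{equation*}
where $\mathcal{T}=c(\Z_{\ge0})\rtimes_\alpha\N$. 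The content is that tensoring by the fixed algebra $C(\Z_s)$ commutes with forming the crossed product, precisely because the endomorphism acts as the identity on the $C(\Z_s)$ factor.

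To build the isomorphism, I would use the universal property of the crossed product. Write $v$ for the generating isometry of $\mathcal{T}$, so that $v$ and the copy of $c(\Z_{\ge0})$ inside $\mathcal{T}$ satisfy $vav^*=\alpha(a)$. Inside $\mathcal{T}\otimes C(\Z_s)$ the element $v\otimes 1$ is an isometry, the algebra $c(\Z_{\ge0})\otimes C(\Z_s)$ embeds naturally, and one checks directly the crossed product relation
\begin{equation*}
(v\otimes 1)(a\otimes f)(v\otimes 1)^* = (vav^*)\otimes f = \alpha(a)\otimes f = (\alpha\otimes I)(a\otimes f).
\end{equation*}
By universality there is a $*$-homomorphism $\Phi$ from the crossed product to $\mathcal{T}\otimes C(\Z_s)$ sending $a\otimes f\mapsto a\otimes f$ and the generating isometry $\fancyL\mapsto v\otimes 1$. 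Surjectivity is immediate: the range of $\Phi$ contains all of $c(\Z_{\ge0})\otimes C(\Z_s)$ together with $v\otimes 1$, and since $c(\Z_{\ge0})$ and $v$ generate $\mathcal{T}$, these generate $\mathcal{T}\otimes C(\Z_s)$.

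For injectivity I would invoke the faithfulness criterion recalled in Section~\ref{sec3} (from~\cite{BKR}): a representation of the crossed product is faithful provided it is faithful on the coefficient algebra and admits a gauge action. The gauge action on $\mathcal{T}$, scaling $v$ by $\mathrm{e}^{2\pi\mathrm{i}\theta}$ and fixing $c(\Z_{\ge0})$, tensored with the identity on $C(\Z_s)$, gives a strongly continuous one-parameter family of automorphisms of $\mathcal{T}\otimes C(\Z_s)$ that fixes $c(\Z_{\ge0})\otimes C(\Z_s)$ and scales $v\otimes 1$ by $\mathrm{e}^{2\pi\mathrm{i}\theta}$, hence is a gauge action compatible with $\Phi$. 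Faithfulness of $\Phi$ on the coefficient algebra is where the main care is needed: $\Phi$ restricted to $c(\Z_{\ge0})\otimes C(\Z_s)$ is the minimal tensor product of the inclusion $c(\Z_{\ge0})\hookrightarrow\mathcal{T}$ with the identity on $C(\Z_s)$, and since $C(\Z_s)$ is nuclear and the minimal tensor product of injective $*$-homomorphisms is injective, this restriction is faithful. The two conditions together force $\Phi$ to be an isomorphism, which is the desired conclusion. The main obstacle is thus the faithfulness step, and in particular confirming that the natural gauge action descends correctly to the tensor product and that the coefficient embedding remains injective after tensoring.
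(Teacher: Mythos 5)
Your proof is correct, and it takes a genuinely different route from the paper's. The paper explicitly remarks that the result ``can be inferred from general considerations on tensor products of crossed products'' but opts instead for a direct argument: it builds a concrete representation of $c(\Z_{\ge0},C(\Z_s))\rtimes_{\alpha\otimes I}\N$ on $\ell^2(\Z\times\Z_{\ge0})$ via operators $\mathcal{U}$ and $\widehat M_F$, exhibits the diagonal unitaries $\mathbb{U}_\theta$ implementing the gauge action (so the representation is faithful by the criterion of Section~\ref{sec3}), and then factors the generators as $\widehat M_{fc}=\mu_f\nu_c$ to identify the image with $C(\Z_s)\otimes\mathcal{T}\bigl(\ell^2(\Z_{\ge0})\bigr)$. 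You instead carry out the general tensor-factorization argument abstractly: universality gives the surjection $\Phi$ onto $\mathcal{T}\otimes C(\Z_s)$, and injectivity follows from the same gauge-invariant uniqueness theorem of~\cite{BKR}, applied to the gauge action $\rho_\theta\otimes\mathrm{id}$ and to the coefficient embedding $c(\Z_{\ge0})\otimes C(\Z_s)\hookrightarrow\mathcal{T}\otimes C(\Z_s)$, which is injective since $C(\Z_s)$ is nuclear (so there is a unique tensor norm and the spatial tensor product of injections is injective). Both proofs ultimately rest on the identical faithfulness criterion; yours is shorter, avoids choosing a Hilbert space, and generalizes immediately to any endomorphism of the form $\alpha\otimes\mathrm{id}$ on $B\otimes C$ with $C$ nuclear, while the paper's gives an explicit spatial picture that it can recycle from Proposition~\ref{IUStructure} and that keeps the whole paper within its uniform representation-theoretic framework. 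The one point worth stating explicitly in your write-up is that the BKR criterion, phrased for Hilbert-space representations, applies to your $*$-homomorphism $\Phi$ after composing with a faithful representation of $\mathcal{T}\otimes C(\Z_s)$ that carries the gauge action; this is routine but should be said.
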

\begin{proof} This result can be inferred from general considerations on tensor products of crossed products. However, we give a direct argument specific to our situation slightly modifying the arguments from Proposition~\ref{IUStructure}.

If $\{E_{k,l}\}$ is the canonical basis in $\ell^2(\Z\times\Z_{\ge0})$, then a faithful representation of the crossed product $c(\Z_{\ge0}, C(\Z_s))\rtimes_{\alpha\otimes I}\N$ is given by operators $\mathcal{U}$ and \smash{$\widehat M_F$} for functions $F=(f_n)_{n\in \Z_{\ge0}}\in c(\Z_{\ge0}, C(\Z_s))$ given by the following formulas:
\begin{equation*}
\mathcal{U}E_{k,l}=E_{k, l+1} ,\qquad \mathcal{U}^*E_{k,l} =
\begin{cases}
E_{k, l-1} &\textrm{if }l\ge1,\\
0 &\textrm{if }l=0,
\end{cases}\qquad\text{and}\qquad \widehat M_FE_{k,l} = f_l(k)E_{k,l} .
\end{equation*}
It is easy to verify that these operators satisfy the defining relations for the crossed product. Additionally, as before, the operators $\mathbb{U}_\theta$ from equation \eqref{repgauge} implement the gauge action in this representation. Since \smash{$F\mapsto \widehat M_F$} is faithful, it follows that the above representation is a faithful representation of the crossed product $c(\Z_{\ge0}, C(\Z_s))\rtimes_{\alpha\otimes I}\N$.

Since $c(\Z_{\ge0}, C(\Z_s))\cong c(\Z_{\ge0}) \otimes C(\Z_s)$, finite linear combinations of functions of the form $fc$ with $f\in C(\Z_s)$ and $c\in c(\Z_{\ge0})$ are dense in $c(\Z_{\ge0}) \otimes C(\Z_s)$.
We get a factorization
\begin{equation*}
 \widehat M_{fc} E_{k,l} = f(k)c(l) E_{k,l}.
\end{equation*}
Denoting
$\mu_{f} E_{k,l} = f(k) E_{k,l}$ and $ \nu_{c} E_{k,l} = c(l) E_{k,l}$,
we see that \smash{$\widehat M_{fc}=\mu_{f}\nu_{c}$}. Moreover, for all~${f\in C(\Z_s)}$ and all $c\in c(\Z_{\ge0})$ the operators $\mu_f$ commute with $\nu_{c}$ and with $\mathcal{U}$. Clearly,
\begin{equation*}
C^*(\mu_f\mid f\in C(\Z_s))\cong C(\Z_s)
\end{equation*}
and, since both $\mathcal{U}$ and $\nu_{c}$ depend only on the second coordinate $l\in\Z_{\ge0}$, we have
\begin{equation*}
C^*(\mathcal{U}^m\nu_{c}\mid m\in \Z_{\ge0},\, c\in c(\Z_{\ge0}))\cong \mathcal{T}\bigl(\ell^2(\Z_{\ge0})\bigr),
\end{equation*}
where $\mathcal{T}\bigl(\ell^2(\Z_{\ge0})\bigr)$ is the Toeplitz algebra of operators in $B\bigl(\ell^2(\Z_{\ge0})\bigr)$. Finally, this implies that
\begin{align*}
A_W/\mathcal{I}_W&\cong
C^*\big(\mathcal{U}^m \widehat M_F\mid m\in \Z_{\ge0},\, F\in c(\Z_{\ge0}, C(\Z_s))\big)\\
&=C^*\big(\mathcal{U}^m\nu_{c}\mu_f\mid m\in \Z_{\ge0},\, c\in c(\Z_{\ge0}),\, f\in C(\Z_s)\big)\cong C(\Z_s)\otimes \mathcal{T}\bigl(\ell^2(\Z_{\ge0})\bigr).\tag*{\qed}
\end{align*}\renewcommand{\qed}{}
\end{proof}

The main result of this section in the theorem below is then a direct consequence of equation~\eqref{SESequence}, Propositions~\ref{IWStructure} and~\ref{WAlgebraQuotient}.
\begin{Theorem}
We have the short exact sequence of $C^*$-algebras
\begin{equation*}%\label{AWExact}
0\rightarrow \mathcal{K}\rightarrow A_W\rightarrow \mathcal{T}\otimes C(\Z_s)\rightarrow 0 .
\end{equation*}
\end{Theorem}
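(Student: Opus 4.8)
The plan is to obtain the claimed sequence as the specialization to $\mathcal{J}=W$ of the general short exact sequence \eqref{SESequence} built in Section~\ref{sec3}. The first step is to verify that the hypotheses licensing \eqref{SESequence} are in force for $W$: that $B_W$ is a coefficient algebra of $A_W$, which is \cite[Proposition 7.8]{HKMP3}, and that the ideal $I_W=\mathcal{K}(H)_{\mathrm{inv}}$ is both $\alpha_W$- and $\beta_W$-invariant, which was recorded when $I_W$ was introduced. Granting this, Proposition~\ref{FactorCross} (together with the preceding construction of $\mathcal{I}_W$) furnishes the exact sequence
\[
0\to \mathcal{I}_W\to A_W\to B_W/I_W \rtimes_{[\alpha_W]}\N\to 0,
\]
so the entire task reduces to identifying the two outer terms with the concrete algebras appearing in the statement.

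For the kernel, I would simply invoke Proposition~\ref{IWStructure}, which gives $\mathcal{I}_W=\mathcal{K}(H)$; in the notation of the theorem this is precisely $\mathcal{K}$, the compacts on a separable Hilbert space. For the quotient, Proposition~\ref{FactorCross} already expresses the cokernel as the crossed product $B_W/I_W \rtimes_{[\alpha_W]}\N$. Combining the earlier identification $B_W/I_W\cong c(\Z_{\ge0})\otimes C(\Z_s)$ with the computation that $[\alpha_W]$ corresponds to $\alpha\otimes I$, Proposition~\ref{WAlgebraQuotient} evaluates this crossed product as $\mathcal{T}\otimes C(\Z_s)$. Substituting both identifications into the displayed sequence then yields exactly
\[
0\rightarrow \mathcal{K}\rightarrow A_W\rightarrow \mathcal{T}\otimes C(\Z_s)\rightarrow 0 ,
\]
as desired.

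Because all the substantive work has already been carried out in Propositions~\ref{IWStructure} and~\ref{WAlgebraQuotient}, I do not expect a genuine obstacle here; the theorem is assembled by direct substitution. The only point meriting care is bookkeeping: one must check that the two identifying isomorphisms are compatible with the structure maps of \eqref{SESequence}, i.e.\ that they intertwine the inclusion $\mathcal{I}_W\hookrightarrow A_W$ and the quotient map $A_W\to A_W/\mathcal{I}_W$ rather than merely matching the algebras abstractly. Since both propositions are established through explicit identifications carried out \emph{inside} $A_W$ (via the matrix units generating $\mathcal{K}(H)$ and via the concrete faithful representation on $\ell^2(\Z\times\Z_{\ge0})$, respectively), this compatibility is automatic, and the proof is complete.
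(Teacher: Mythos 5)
Your proposal is correct and follows exactly the paper's argument: the theorem is obtained by specializing the general short exact sequence \eqref{SESequence} to $\mathcal{J}=W$ and substituting the identifications $\mathcal{I}_W=\mathcal{K}(H)$ from Proposition~\ref{IWStructure} and $A_W/\mathcal{I}_W\cong\mathcal{T}\otimes C(\Z_s)$ from Proposition~\ref{WAlgebraQuotient}. Your additional remark about the identifications being realized inside $A_W$ is a reasonable (if implicit in the paper) sanity check, but it does not change the route.
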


\subsection[K-theory of A\_W]{$\boldsymbol{K}$-theory of $\boldsymbol{A_W}$}

The above sequence, paired with the K\"unneth formula for $K$-theory, allows us to compute the $K$-theory of $A_W$.
\begin{Proposition}
The $K$-theory of is given by
$K_0(A_W) \cong C(\Z_s,\Z) \oplus \Z$ and $ K_1(A_W) \cong 0$.
\end{Proposition}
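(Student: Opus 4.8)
The plan is to mirror the strategy already used in the $A_V$ and $A_S$ cases: insert the short exact sequence $0 \to \mathcal{K} \to A_W \to \mathcal{T}\otimes C(\Z_s) \to 0$ into the six-term exact sequence in $K$-theory, having first determined the $K$-theory of the quotient $\mathcal{T}\otimes C(\Z_s)$ by means of the K\"unneth formula.

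First I would record the $K$-theory of the two tensor factors. The Toeplitz algebra satisfies $K_0(\mathcal{T})\cong\Z$ and $K_1(\mathcal{T})\cong 0$, while $C(\Z_s)$, being AF on the totally disconnected space $\Z_s$ (as noted earlier in the treatment of $\mathcal{I}_U$), satisfies $K_0(C(\Z_s))\cong C(\Z_s,\Z)$ and $K_1(C(\Z_s))\cong 0$. Both algebras are nuclear and lie in the bootstrap class, so the K\"unneth theorem applies. Since both $K_1$ groups vanish and $K_0(\mathcal{T})\cong\Z$ is torsion-free, every $\mathrm{Tor}$ term is zero and the K\"unneth sequence collapses to $K_*(\mathcal{T}\otimes C(\Z_s))\cong K_*(\mathcal{T})\otimes K_*(C(\Z_s))$, giving
\[
K_0(\mathcal{T}\otimes C(\Z_s))\cong C(\Z_s,\Z),\qquad K_1(\mathcal{T}\otimes C(\Z_s))\cong 0.
\]

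Next I would write out the six-term sequence using $K_0(\mathcal{K})\cong\Z$ and $K_1(\mathcal{K})\cong 0$ together with the above. Because $K_1$ of both the ideal and the quotient vanish, the exponential map (landing in $K_1(\mathcal{K})=0$) and the index map (originating from $K_1(\mathcal{T}\otimes C(\Z_s))=0$) are automatically zero. In particular $K_1(A_W)$ is squeezed between two zero groups and hence $K_1(A_W)\cong 0$, and vanishing of the index map forces $K_0(\mathcal{K})\to K_0(A_W)$ to be injective. The top row therefore reduces to the short exact sequence of abelian groups
\[
0\to\Z\to K_0(A_W)\to C(\Z_s,\Z)\to 0.
\]

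Finally I would invoke freeness of $C(\Z_s,\Z)$: the group of continuous integer-valued functions on the Cantor set $\Z_s$ is free abelian, exactly as was used for $C(\Z_s^\times,\Z)$ in the Hensel case. Hence the sequence splits and $K_0(A_W)\cong C(\Z_s,\Z)\oplus\Z$, completing the proof. Unlike the Bunce--Deddens case, there is no delicate index-map computation here, since $K_1$ of the quotient already vanishes; the only steps demanding any care are verifying the hypotheses of the K\"unneth theorem and the freeness used for the splitting, neither of which is a genuine obstacle.
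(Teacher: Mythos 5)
Your proposal is correct and follows essentially the same route as the paper: K\"unneth formula for $\mathcal{T}\otimes C(\Z_s)$ (using freeness of $K_*(\mathcal{T})$ to kill the Tor terms), the six-term sequence for $0\to\mathcal{K}\to A_W\to\mathcal{T}\otimes C(\Z_s)\to 0$, and the splitting of the resulting short exact sequence via freeness of $C(\Z_s,\Z)$. The paper's proof is identical in every essential step, so there is nothing to add.
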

\begin{proof}
We begin by using the K\"unneth formula to compute the $K$-theory of $\mathcal{T} \otimes C(\Z_s)$. Since the $K$-theory of $\mathcal{T}$ is free, by \cite{SC}, we have the following isomorphisms:
\begin{gather*}
K_0(\mathcal{T} \otimes C(\Z_s)) \cong (K_0(\mathcal{T}) \otimes K_0(C(\Z_s))) \oplus ( K_1(\mathcal{T}) \otimes K_1(C(\Z_s))),
\\
K_1(\mathcal{T} \otimes C(\Z_s)) \cong (K_0(\mathcal{T}) \otimes K_1(C(\Z_s))) \oplus ( K_1(\mathcal{T}) \otimes K_0(C(\Z_s))).
\end{gather*}
The $K$-theory of $\mathcal{T}$ is well known: $K_0(\mathcal{T}) \cong \Z$, and $K_1(\mathcal{T}) \cong 0$. Since $\Z_s$ is totally disconnected, $C(\Z_s)$ is AF by \cite[Example III.2.5]{KD}. Moreover, by \cite[Exercise 3.4]{RLL} its $K$-theory can be computed as $K_0(C(\Z_s)) \cong C(\Z_s,\Z)$, and $K_1(C(\Z_s)) \cong 0$. Plugging this into the above shows that
$
K_0(\mathcal{T} \otimes C(\Z_s)) \cong C(\Z_s,\Z)$ and $ K_1(\mathcal{T} \otimes C(\Z_s)) \cong 0$.
Using this, we can compute $K_*(A_W)$ from the $6$-term exact sequence in $K$-theory,
\begin{equation*}
\begin{tikzcd}
 K_0(\mathcal{K}) \arrow{r} & K_0(A_W) \arrow{r} & K_0(\mathcal{T} \otimes C(\Z_s)) \arrow{d}{\textup{exp}} \\
 K_1(\mathcal{T} \otimes C(\Z_s)) \arrow{u}{\textup{ind}} & K_1(A_W) \arrow{l} & K_1(\mathcal{K}) \arrow{l}.
\end{tikzcd}
\end{equation*}
Since both $K_1(\mathcal{K})$ and $K_1(\mathcal{T} \otimes C(\Z_s))$ are zero, it follows immediately by exactness on the bottom row that
$K_1(A_W) \cong 0$.
To compute $K_0(A_W)$, we extract a short exact sequence from the top row
$0 \to \Z \to K_0(A_W) \to C(\Z_s,\Z) \to 0$.
Since $C(\Z_s,\Z)$ is free, by \cite[Exercise~7.7.5]{HR}, we see immediately that this sequence right splits. Hence, by the splitting lemma,
$K_0(A_W) \cong \Z \oplus C(\Z_s,\Z)$.
This completes the proof.
\end{proof}

\section{Structure summary}
For convenience, the following table provides a short summary of relevant ideals and the corresponding quotients for the four shift algebras discussed in the above sections.

\begin{center}
%\makebox[11cm]{
\begin{tabular}{c||c|c|c|c|c}\renewcommand{\arraystretch}{1.3}
Index & Coefficient algebra & Ideal & Quotient & Promotion & Quotient \\
$\mathcal{J}$ & $B_\mathcal{J}$& $I_\mathcal{J}$ & $B_\mathcal{J}/I_\mathcal{J}$ & $\mathcal{I}_\mathcal{J}$ & $A_\mathcal{J}/\mathcal{I}_\mathcal{J}$ \\
\hline
\hline
$U$ & $C^*(M_f,P_n)$ & $C^*(M_fP_n)$ & $C(\Z_s)$ & $\kcal \otimes C(\Z_s)$ & $BD(s^
\infty)$ \\
& $\cong c(\Z_{\ge 0},C(\Z_s))$ & $\cong c_0(\Z_{\ge 0},C(\Z_s))$ & & & \\
\hline
$V$ & $C^*(M_f,P_{(n,0)})$ & $C^*(P_{(n,0)})$ & $C(\Z_s)$ & $\kcal$ & ${\rm HS}(s)$ \\
& $\cong C(X_V)$ & $\cong c_0(\Z_{\ge 0})$ & & & \\
\hline
$S$ & $C^*(S_{(n,x),(n,y)})$ & $\kcal(H)_{\mathrm{inv}}$ & $\mathcal{O}_{s,\mathrm{inv}}$ & $\kcal(H)$ & $\mathcal{O}_s$ \\
\hline
$W$ & $C^*(M_F,\pcal_n)$ & $\kcal(H)_{\mathrm{inv}}$ & $c(\Z_{\ge 0})\otimes C(\Z_s)$ & $\kcal(H)$ & $\mathcal{T}\otimes C(\Z_s)$
\end{tabular}
%}
\end{center}

\subsection*{Acknowledgements}

The authors wish to thank the referees for their very insightful comments and suggestions. It should be noted that one of the referees pointed out that some of our results can also be obtained using the theory of the Toeplitz $C^*$-algebra of a $C^*$-correspondence as developed by \cite{Pim}. While this gives a very intriguing alternative, possibly simpler technique of computing the $K$-theory, we decided not to follow this approach since the theory of $C^*$-correspondence is separate from our unified approach based on coefficient algebras and crossed products by endomorphisms.

\pdfbookmark[1]{References}{ref}
\LastPageEnding

\end{document}